\newtheorem{thm}{Theorem}
\newtheorem{lem}[thm]{Lemma}
\newtheorem{rmk}[thm]{Remark}
\newtheorem{prop}[thm]{Proposition}
\newcommand{\bs}{\ensuremath{\boldsymbol}}
\newcommand{\R}{\ensuremath{\mathbb{R}}}
\title[DeepWKB: Learning WKB of Stochastic Systems]{DeepWKB: Learning WKB Expansions of Invariant Distributions for Stochastic Systems}
\author{Yao Li}
\address{Department of Mathematics and Satistics, University of Massachusetts Amherst}
\email{liyao@umass.edu}
\author{Yicheng Liu}
\address{Department of Mathematics, Mount Holyoke College}
\email{liu235y@mtholyoke.edu}
\author{Shirou Wang}
\address{School of Mathematics, Jilin University\\
$\&$\\
Department of Mathematical and Statistical Sciences, University of Alberta}
\email{shirou@jlu.edu.cn}
\subjclass[2020]{Primary {60F10}; Secondary {60J25,37M25}}
\keywords{Stochastic dynamical system, Small-noise perturbation, Deep learning method, WKB approximation of invariant distribution, 
Monte Carlo simulation}
\thanks{Y. Li was partially supported by NSF DMS-2108628. 
S. Wang was partially supported by NSFC 12201244, NSFC 12271204, and a faculty development grant from Jilin University.}
\begin{document}

\begin{abstract}
This paper introduces a novel deep learning method, called {\it DeepWKB}, for estimating  the invariant distribution  of randomly perturbed systems via its Wentzel–Kramers–Brillouin (WKB) approximation
\[u_\epsilon(\bs x)=Q(\epsilon)^{-1}Z_\epsilon(\bs x)\exp\{-V(\bs x)/\epsilon\},\]
where $V$ is known as the quasi-potential, $\epsilon$ denotes the noise strength, and $Q(\epsilon)$ is the normalization factor.
By utilizing both Monte Carlo  data and the partial differential equations satisfied by  $V$ and  $Z_\epsilon$, 
the DeepWKB method computes $V$ and  $Z_\epsilon$ separately. This enables an approximation of the invariant distribution in the singular regime where $\epsilon$ is sufficiently small, which remains a significant challenge for most existing methods. 
Moreover, the DeepWKB method is applicable to higher-dimensional stochastic systems whose deterministic counterparts admit non-trivial attractors. In particular, it
provides a scalable and flexible alternative for computing the quasi-potential, which plays a key role in the analysis of rare events, metastability, and the stochastic stability of complex systems.
\end{abstract}
\maketitle

\section{Introduction}
Random perturbations of dynamical systems, often modeled by stochastic differential equations, arise in a wide range of scientific and engineering applications. Beyond their practical importance, such  systems have been extensively studied from a theoretical perspective, particularly in connections with rare events, metastability, and long-time dynamical behaviors. Consider a stochastic differential equation (SDE) on $\mathbb R^n$ of the form 
\begin{equation}
  \label{SDE}
  \mathrm{d} X_{t} = f(X_{t}) \mathrm{d}t + \sqrt{\epsilon}\sigma (X_{t})
  \mathrm{d}B_{t},
\end{equation}
where 
$f=(f^i):\mathbb R^n\to \mathbb R^n
$ is a
vector field,
$\sigma:\mathbb R^n\to \mathbb R^{n\times m}$ is a  matrix-valued function, $B_{t}$ is a
standard $m$-dimensional Wiener process, and $\epsilon > 0$ is a small parameter. 
The stochastic system \eqref{SDE} can be regarded as a small-noise perturbation of the deterministic dynamical system governed by the  ordinary differential equation 
\begin{equation}
  \label{ODE}
\mathrm{d}X_{t} = f(X_{t})\mathrm{d}t.
\end{equation}

A natural way to analyze the long-time behavior of the stochastic system \eqref{SDE} is to examine  its invariant distribution, which often captures the limiting dynamics. In this regard, the Fokker-Planck equation  
plays a central role 
as it characterizes
the evolution of  distributions of the  stochastic system. 
Specifically, let $X_t$ be the stochastic process given by \eqref{SDE}. The associated Fokker-Planck equation is a partial differential equation (PDE) 
given by
\begin{eqnarray*}
\partial_tu=-\sum_{i=1}^n\partial_i(f^iu)+\frac{1}{2}\epsilon\sum_{i,j=1}^n\partial_{ij}(a^{ij}u):=\mathcal Lu, 
\end{eqnarray*}
where $u=u(\bs x,t)$ denotes the density function of distribution of $X_t$, and $\partial_i=\partial_{x_i}, \partial_{ij}=\partial_{x_ix_j}$. The matrix 
$A= (a^{ij})= \sigma \sigma ^{T}$ is  called the diffusion matrix.
If $X_t$ admits an invariant distribution $\pi_\epsilon$ with density $u_\epsilon=u_\epsilon(\bs x)$, then $u_\epsilon$ satisfies the stationary Fokker-Planck equation along with the normalization condition 
\begin{eqnarray}
\label{stationary_FPE}
\left\{
\begin{aligned}
& \mathcal Lu_\epsilon  =-\sum_{i = 1}^{n}\partial_i(f^i u_{\epsilon}) + \frac{1}{2}\epsilon
  \sum_{i,j=1}^n\partial_{ij}(a^{ij} u_\epsilon)=0,\\
& \displaystyle\int_{\mathbb R^n} u_\epsilon \mathrm{d}\bs x  =1,\quad u_\epsilon\ge0.
\end{aligned}
\right.
\end{eqnarray}
Throughout this paper, it is assumed that such an invariant distribution   $\pi_\epsilon$ exists uniquely. 

The Fokker-Planck equation rarely admits an analytic solution, especially when the underlying deterministic system \eqref{ODE} is complicated, with occasional exceptions such as the Ornstein–Uhlenbeck process. On the other hand, 
various numerical methods  have been developed to solve the stationary Fokker-Planck equation \eqref{stationary_FPE}, which are generally classified into two categories: the PDE-based approaches and  stochastic sampling methods such as Monte Carlo simulations.
The  PDE-based approaches usually yield solutions with high accuracy but are generally effective only in low dimensions (typically $n\le2$), whereas the Monte Carlo simulations provide greater flexibility by sampling over long trajectories of \eqref{SDE}, but often suffer from low accuracy, particularly in high-dimensional settings. 
In \cite{li2019data}, a hybrid data-driven approach is proposed for computing the invariant density $u_\epsilon$,  combining 
the high accuracy of numerical PDE methods with the flexibility of Monte Carlo simulations. 
By treating the Monte Carlo  solution as ``reference data", the problem is framed within an optimization framework, which was later addressed using  artificial neural networks \cite{zhai2022deep}. However, when $\epsilon \ll 1$, the Fokker-Planck equation becomes highly singular, and most existing numerical methods
fail because the solution strongly concentrates around a low-dimensional submanifold within the domain. 

In applications where $\epsilon \ll 1$, a classical approach for solving singular perturbation problems is  the WKB method, a century-old  technique  named after Wentzel, Kramers, and Brillouin \cite{wentzel1926verallgemeinerung, kramers1926wellenmechanik, brillouin1926mecanique}. In the context of SDEs, the WKB method involves expressing the invariant density function in the following WKB form 
\begin{eqnarray}\label{WKB}
u_\epsilon(\bs x)=Q(\epsilon)^{-1}Z_\epsilon(\bs x)\exp\{-V(\bs x)/\epsilon\}
\end{eqnarray}
where $Q(\epsilon)>0$ is the  normalization factor, and the prefactor $Z_\epsilon$ satisfies
\begin{eqnarray}\label{Z0}
0<Z_0(\bs x):=\lim\nolimits_{\epsilon\to0}Z_\epsilon(\bs x)<\infty.
\end{eqnarray}
 The function $V$ is often called the quasi-potential since it coincides with the Freidlin-Wentzel quasi-potential function in large deviation theory  when it is twice differentiable \cite{freidlin2012random, ludwig1975persistence}.
Although the WKB expression \eqref{WKB}  is rigorously justified only when the 
underlying deterministic 
system admits a simple attractor, such as
a stable equilibrium \cite{freidlin2012random}  or, in certain cases, a limit cycle \cite{day1994regularity,mou2025quasi}, it is often applied to broader settings, including  
non-equilibrium systems with more complex dynamics. In particular, it provides a qualitative insight into the concentration  of  invariant  distributions in the zero-noise limit, which is closely related to the
stochastic stability of  deterministic systems  \cite{huang2015integral, huang2015steady, ji2019quantitative, huang2018concentration,freidlin2012random}.

This paper introduces a novel machine learning framework, termed  {\bf DeepWKB}, to compute the invariant distribution of stochastic  system \eqref{SDE} via its WKB approximations. The approach is similar in spirit to that of \cite{zhai2022deep}, which utilizes Monte Carlo simulation data to estimate the solution of a PDE at a set of collocation points, and then trains an artificial neural network to fit both the Monte Carlo data and the corresponding PDE. The latter part of the method is commonly known as the physics-informed neural network (PINN) \cite{raissi2019physics}. 
Specifically, instead of attempting to solve a singular Fokker-Planck equation with a single neural network, the DeepWKB method trains two separate neural networks for solving two non-singular PDEs which govern the quasi-potential $V$ and prefactor $Z_0$, respectively, thereby yielding a more structured WKB-type solution of \eqref{stationary_FPE}. 
As shown in \cite{zhai2022deep}, the accuracy of Monte Carlo simulations need not  be  high, as the deep neural network solver is highly tolerant of spatially uncorrelated errors in the Monte Carlo data.
Moreover, the DeepWKB approach is applicable regardless of whether the attractor  is a fixed point, a limit cycle, an invariant manifold, or even a strange attractor. In particular, the use of artificial neural networks makes it well-suited to higher-dimensional problems that are usually infeasible for grid-based methods.

A key step in the DeepWKB framework is obtaining the training data for  $V$ and $Z_0$ from the Monte Carlo simulations of invariant distributions. 
While a single Monte Carlo simulation is insufficient to estimate $V$ and $Z_0$,  linear regression on simulations at multiple noises  enables an estimation of $V$ and $Z_0$ in the neighborhood of the attractor. 
The method of characteristics can then extend the estimation to a larger region. In addition, to handle general attractors beyond a simple stable equilibrium, an extra training set is incorporated to enforce $V = 0$ at  the attractor. The DeepWKB method is validated on several examples, including the Van der Pol system with a stable limit cycle, the figure-eight attractor whose invariant  measure $\pi_\epsilon$ converges weakly at a logarithmic rate, 
a chaotic system which admits a strange attractor, and a coupled Van der Pol system admitting a stable invariant manifold. 

We note that  computing the quasi-potential  $V$ is of independent interest,
as it characterizes the difficulty of transitions between states in the small noise regime, which is essential for understanding the first exit problem and metastability of stochastic systems \cite{wang2008potential, wang2015landscape, zhou2012quasi, zhou2016construction,dykman1994large}.
Nevertheless, the computation of $V$ has been a long-standing challenge. Over the past decades, various numerical methods, including the ordered upwind Hamilton-Jacobi solver \cite{dahiya2018ordered}, the minimal action method \cite{weinan2004minimum, heymann2008geometric}, the action plot method \cite{beri2005solution}, and more recently, artificial neural network solvers \cite{li2022machine, lin2022data}, 
have been developed to numerically compute the quasi-potential. 
However, all these methods  either suffer from high computational cost or encounter significant difficulties when the attractor exhibits non-trivial geometric structure. Since the computation of  quasi-potential is a key component of the DeepWKB method, this paper provides a new approach for computing the quasi-potential of stochastic system \eqref{SDE} with higher dimensions, applicable to more general settings where the attractor is not limited to a stable equilibrium or a stable limit cycle.

Finally, it should be noted that the WKB approximation is  theoretically justified  in very limited situations. It is thus necessary to verify whether the invariant  density  $u_\epsilon$ indeed satisfies the WKB form \eqref{WKB} before applying any numerical methods based on it. 
To this end, a statistical method is proposed to assess the validity of the WKB approximation  for invariant distributions.  
Specifically, the probability density function $u_\epsilon$ is regarded as satisfying WKB form statistically if the rescaled residual sum of squares of the simulated data follows a $\chi^{2}$-distribution; see Section \ref{sec:WKB_statistical_validation} for details.

This paper is organized as follows.  Section \ref{sec:pre}  presents preliminaries for the subsequent sections. Section \ref{sec:3}  
presents the linear regression method for estimating the quasi-potential $V$ and prefactor $Z_\epsilon$ and provides a statistical validation tool for the WKB approximation, 
establishing the theoretical foundation of the DeepWKB framework. 
Section \ref{sec:V} and \ref{sec:Z} introduce the artificial neural network training methods for the quasi-potential function $V$ and the prefactor function $Z$, respectively. Section \ref{sec:example} presents several numerical examples, including  statistical validations of their WKB approximations.

\section{Preliminary}\label{sec:pre}
This section reviews key elements for the establishment of the DeepWKB method. These includes the Hamilton–Jacobi equation satisfied by the quasi-potential function and the symplectic numerical scheme  used for expanding the training set.

\subsection{Quasi-potential and Hamilton-Jacobi equation}
In the WKB expansion of invariant densities, both the quasi-potential function $V$ and the prefactor function $Z_\epsilon$ satisfy certain non-singular PDEs. To see this, substituting \eqref{WKB} into the stationary 
Fokker-Planck equation \eqref{stationary_FPE} and using the symmetry of the diffusion $A$ yields
\begin{align*}
0 = &\ \frac{1}{\epsilon}\left ( \sum_{i=1}^{n}f^{i} \partial_i V + \frac{1}{2} \sum_{i,j = 1}^{n} a^{ij} \partial_i V
    \partial_j V\right
    )Z_\epsilon\exp \left (- \frac{1}{\epsilon} V \right)\\
   &-\ \sum_{i = 1}^{n} \left( f^i\partial_i Z_\epsilon + \sum_{j = 1}^{n} a^{ij}
     \partial_{j} V\partial_{i} Z_\epsilon
     +\sum_{i = 1}^{n} \partial_if^i + \frac{1}{2}
     \sum_{i,j = 1}^{n} a^{ij} \partial^{2}_{ij} V + \sum_{i,j =
     1}^{n}\partial_{i} a^{ij}\partial_{j}V
     \right)\cdot\exp\left (- \frac{1}{\epsilon}V \right) \\
 & +  \epsilon \left ( \frac{1}{2}\sum_{i,j = 1}^{n}Z_\epsilon      \partial^{2}_{ij}a^{ij} + \sum_{i,j = 1}^{n}
      \partial_{i} a^{ij} \partial_{j}Z_\epsilon+ \frac{1}{2}\sum_{i,j
      = 1}^{n} a^{ij} \partial^{2}_{ij}Z_\epsilon\right ) \exp \left (-
      \frac{1}{\epsilon} V \right) \\
  := &\left( \frac{1}{\epsilon}
      \mathcal{H}V\cdot Z_\epsilon
      - \mathcal{L}^{0}Z_\epsilon + \epsilon \mathcal{L}^{1}Z_\epsilon
       \right) \exp \left (- \frac{1}{\epsilon} V \right).
\end{align*}

Expanding $Z_\epsilon(\bs x)=\sum_{k=0}^\infty Z_k(\bs x)\epsilon^k$ in powers of $\epsilon$ and matching the coefficients of equal orders,  the following equations for functions $V$ and $Z_k$'s are obtained: 
\begin{equation}
  \label{HJE}
  \mathcal{H}V = \sum_{i=1}^{n}f^{i} \partial_{i} V +
    \frac{1}{2} \sum_{i,j = 1}^{n} a^{ij} \partial_i V
    \partial_j V = 0 \,,
\end{equation}
\begin{eqnarray}\label{transZ}
\mathcal{L}^{0}Z_{0}&=&\sum_{i = 1}^{n} \left( f^{i} + \sum_{j = 1}^{n} a^{ij}
     \partial_j V
     \right ) \partial_i Z_{0}\\
     &\ &+ \left ( \sum_{i = 1}^{n} \partial_if^{i} + \frac{1}{2}
     \sum_{i,j = 1}^{n} a^{ij} \partial^{2}_{ij} V + \sum_{i,j =
     1}^{n}\partial_{i} a^{ij} \partial_{j}V\right )Z_{0}=0\nonumber
\end{eqnarray}
and
\begin{equation*}
\mathcal{L}^{0}Z_{k+1} + \mathcal{L}^{1}Z_{k} = 0,\quad k=0,1,\cdots.
\end{equation*}
The equation \eqref{HJE} is known as the  Hamilton-Jacobi equation, where  $\mathcal{H}$ is referred to as the Hamilton-Jacobi operator. 

The above calculation requires the quasi-potential $V$ to be at least twice differentiable, 
which is generally not guaranteed. 
In fact, results on the regularity of $V$  remain rather limited. It has been known that if the the attractor of the 
deterministic system \eqref{ODE} is a stable equilibrium
or a limit cycle,  $V$ is at least $C^{2}$ in a sufficiently small neighborhood of the attractor \cite{day1994regularity, day1985some}.
Recently, $C^2$ regularity of quasi-potential is established  when the attractor is an invariant manifold \cite{mou2025quasi}. 

\subsection{Symplectic scheme}\label{sec:symplectic scheme} The symplectic scheme will be used in the training set expansion. Consider the Hamiltonian system
\begin{eqnarray}\label{ham}
\left\{
\begin{aligned}
    &\dot{\bs p} = - \nabla_{\bs q} H\\
    &\dot{\bs q} = \nabla_{\bs p} H
\end{aligned}
\right.
\end{eqnarray}
where $\bs p=(p_i), \bs q=(q_i) \in \mathbb{R}^n$.
Given $\bs \xi = (\bs\xi_{p_i}, \bs\xi_{q_i})^T,\bs\eta = (\bs\eta_{p_i}, \bs\eta_{q_i})^T\in\mathbb{R}^{2n}$, the symplectic form is defined as
\begin{eqnarray*}
 \omega(\bs\xi, \bs\eta) = \sum_{i = 1}^n (\bs\xi_{p_i}\bs\eta_{q_i} - \bs\xi_{q_i}\bs\eta_{p_i}).
\end{eqnarray*}

A numerical integrator is called symplectic  if it preserves the two-form $d\bs p \wedge d\bs q$. More precisely, 
a differentiable map $g: U \rightarrow \mathbb{R}^{2n}$, where  $U\subset \mathbb{R}^{2n}$ is an open set,  is said to be symplectic if \[\omega(g'\bs\xi,g'\bs\eta) = \omega(\bs\xi, \bs\eta)\quad\text{for all}\ \bs\xi,\bs\eta\] where $g'$ denotes the Jacobian of $g$. 
A one-step numerical scheme is said to be symplectic if its one-step mapping is symplectic when applied to a smooth Hamiltonian system. As an example, for equation \eqref{ham}, the symplectic Euler scheme 
\begin{align*}
    & \bs p_{n+1} = \bs p_n - h\cdot\nabla_{\bs q}H(\bs p_n, \bs q_{n+1})\\
    &\bs q_{n+1} =\bs q_n + h\cdot\nabla_{\bs p}H(\bs p_n, \bs q_{n+1})
\end{align*}
is a one-step symplectic scheme of order 1; readers are referred to  \cite{hairer2006geometric} for more details on symplectic scheme. 

\section{Probability representations of WKB expansion}\label{sec:3}
This section presents the estimation of the quasi-potential  $V$ and prefactor $Z_0$ as a linear regression  based on the simulated data. In addition, a statistical criterion is introduced to evaluate the validity of the WKB approximation using residual analysis.

\subsection{Linear regression for  WKB approximation}\label{sub:linear_regression}
A crucial step in the DeepWKB framework concerns the  estimation of the quasi-potential $V$ and the prefactor $Z_0$ from the Monte Carlo data. To this end, the normalization factor $Q(\epsilon)$ needs to be determined. Although  an explicit expression is typically unavailable, its asymptotic order as $\epsilon\rightarrow 0$ can be identified under suitable conditions.

\begin{lem}
\label{lem:dim_Q}
  Let $V:\R^n\to[0,\infty)$ be a twice differentiable function such that  its zero-level set 
\[
  \mathcal{K}= \{ \bs x \in \mathbb{R}^{n} \,|\, V(\bs x) = 0\}
\]
is a $d$-dimensional compact smooth manifold embedded in $\R^n$. 
Assume that 
the Hessian $\nabla^2V(\bs x)$ 
is non-degenerate 
in the directions normal to $\mathcal K$ for all $\bs{x} \in \mathcal{K}$. Then there exists a  constant $C_0>0$ such that
\[
      \lim_{\epsilon \rightarrow 0}
      \frac{Q(\epsilon)}{\epsilon^{(n-d)/2}}=C_0.
\]
\end{lem}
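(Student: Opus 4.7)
The plan is to read $Q(\epsilon) = \int_{\R^n} Z_\epsilon(\bs x)\exp\{-V(\bs x)/\epsilon\}\, d\bs x$, which is forced by the normalization $\int u_\epsilon\, d\bs x = 1$, as a Laplace-type integral whose exponent attains its minimum on the positive-dimensional manifold $\mathcal{K}$, and then apply the Morse--Bott generalization of Laplace's method. The hypothesis that $\nabla^2 V(\bs x)$ is nondegenerate in directions normal to $\mathcal{K}$ is exactly the Morse--Bott condition; combined with $V \ge 0$ and $V|_{\mathcal{K}} = 0$, it forces $\nabla V$ to vanish on $\mathcal{K}$ and the normal Hessian $H(\bs y)$ at each $\bs y \in \mathcal{K}$ to be strictly positive definite on the normal fiber $N_{\bs y}\mathcal{K}$.

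First I would localize. By compactness of $\mathcal{K}$, for each $\delta > 0$ there exists $V_\delta > 0$ with $V(\bs x) \ge V_\delta$ outside a tubular neighborhood $\mathcal{U}_\delta$ of width $\delta$, so the contribution to $Q(\epsilon)$ from $\R^n \setminus \mathcal{U}_\delta$ is exponentially smaller than $\epsilon^{(n-d)/2}$ (tails at infinity are integrable since $u_\epsilon$ is a probability density for some $\epsilon$). Inside $\mathcal{U}_\delta$ I would invoke the tubular neighborhood theorem to introduce coordinates $(\bs y, \bs n) \in \mathcal{K} \times N_{\bs y}\mathcal{K}$ in which the Lebesgue volume factors as $J(\bs y, \bs n)\, d\mathrm{vol}_{\mathcal{K}}(\bs y)\, d\bs n$ with $J(\bs y, 0) \equiv 1$, and then Taylor-expand
\begin{equation*}
    V(\bs y, \bs n) = \tfrac{1}{2}\langle \bs n, H(\bs y)\bs n\rangle + R(\bs y, \bs n), \qquad |R(\bs y, \bs n)| = O(|\bs n|^3)
\end{equation*}
with the remainder controlled uniformly in $\bs y$ by compactness and $C^2$ regularity of $V$.

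The rescaling $\bs n = \sqrt{\epsilon}\,\bs u$ pulls out a Jacobian factor $\epsilon^{(n-d)/2}$. The exponent becomes $-\tfrac{1}{2}\langle \bs u, H(\bs y)\bs u\rangle - R(\bs y, \sqrt{\epsilon}\bs u)/\epsilon$, where the cubic remainder satisfies $R/\epsilon = O(\sqrt{\epsilon}\,|\bs u|^3) \to 0$ pointwise, and the uniform positivity of $H(\bs y)$ supplies a Gaussian envelope dominating the integrand. Dominated convergence together with $Z_\epsilon(\bs y + \sqrt{\epsilon}\bs u) \to Z_0(\bs y)$ then yields
\begin{equation*}
      \lim_{\epsilon \to 0}\frac{Q(\epsilon)}{\epsilon^{(n-d)/2}} = \int_{\mathcal{K}} Z_0(\bs y)\,\frac{(2\pi)^{(n-d)/2}}{\sqrt{\det H(\bs y)}}\, d\mathrm{vol}_{\mathcal{K}}(\bs y) =: C_0,
\end{equation*}
which is finite and strictly positive because $Z_0 > 0$ is continuous on the compact $\mathcal{K}$ and $\det H(\bs y)$ is bounded away from $0$.

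The main obstacle I anticipate is not the calculation itself but the uniform bookkeeping: confirming that the tubular chart is globally defined on a neighborhood of $\mathcal{K}$ (which the tubular neighborhood theorem does provide for smooth compact embedded submanifolds), that Taylor's remainder admits a uniform cubic bound over all of $\mathcal{K}$, and that $H(\bs y)$ is uniformly positive definite so the Gaussian envelope after rescaling is integrable independently of $\bs y$. Each of these reduces to compactness of $\mathcal{K}$ combined with the assumed $C^2$ regularity and nondegeneracy hypotheses, with the exterior tail estimate handled by the exponential factor $e^{-V_\delta/\epsilon}$.
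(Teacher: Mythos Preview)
Your proposal is correct and follows essentially the same route as the paper: both localize to a tubular neighborhood of $\mathcal{K}$, Taylor-expand $V$ in the normal directions, rescale by $\sqrt{\epsilon}$ to extract the factor $\epsilon^{(n-d)/2}$, and pass to the limit to obtain the Gaussian integral $\int_{\mathcal{K}} Z_0(\bs y)\,(2\pi)^{(n-d)/2}\big/\sqrt{\det H(\bs y)}\, d\mathrm{vol}_{\mathcal{K}}(\bs y)$. The only cosmetic difference is that the paper phrases the localization as $\lim_{\epsilon\to 0}\int_{\mathcal{N}_r} u_\epsilon = 1$ (concentration of the density) rather than bounding the exterior contribution to $Q(\epsilon)$ directly, and it suppresses the Jacobian $J(\bs y,\bs n)$ that you track explicitly; your version is, if anything, slightly more careful on those points.
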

\begin{proof}
Without loss of generality, assume $\mathcal K$ has a global coordinate chart; otherwise, the argument applies locally on a finite cover by coordinate charts.
For each $\bs x\in\mathcal K$ and $r>0,$ define the orthogonal neighborhood of $\bs{x}$ by
\begin{eqnarray*}
\mathcal N_r({\bs x})=\big\{\bs y\in\mathbb R^n:\|\bs y-\bs x\|< r\ \text{and}\ \langle\bs y-\bs x,v\rangle=0, \forall v\in T_{\bs x}\mathcal K\big\}, 
\end{eqnarray*}
where $\langle\cdot ,\cdot\rangle$ is the standard inner product on $\mathbb R^n$ and $\|\cdot\|$ is the 
induced norm. 

For sufficiently small $r>0$,  the union 
$\mathcal N_r:=\sqcup_{\bs x\in\mathcal K}\{\bs x\}\times\mathcal N_r({\bs x})$ forms a tubular neighborhood of $\mathcal K.$ 
In suitable coordinates, write 
$\mathcal N_r=\mathcal K\times B_r(\bs 0)$, where $B_r(\bs 0)\subset \mathbb R^{n-d}$ is the radius-$r$ ball centered at $\bs 0$. For $\bs x\in\mathcal N_r$, write $\bs x=(\bs x_1,\bs x_2)$, where 
 $\bs x_1=(x_i)_{i=1}^k$ 
 parametrizing  $\mathcal{K}$ and $\bs{x}_2 = (x_i)_{i=k+1}^n$ the transverse directions.
 Since each  $\bs x=(\bs x_1,\bs 0)\in\mathcal K$ is a local minimum of , it holds that 
\[\partial_{i} V(\bs x_1,\bs 0)=0, \quad \forall i\in\{1,...,n\}.
\]
Expanding $V$ in a Taylor series in the normal direction $\bs{x}_2$, one obtains 
\begin{align*}
V(\bs x_1,\bs x_2)&=\dfrac{1}{2}\sum\nolimits_{d+1\le i,j\le n}\partial^2_{ij}V(\bs x_1,\bs 0)x_ix_j
+\sum\nolimits_{d+1\le i,j\le n}h_{ij}(\bs x_1,\bs x_2)x_ix_j\\
&=\bs x^\top_2\Big(\dfrac{1}{2}\nabla^2V|_{\mathcal N_r(\bs x_1)}(\bs 0)+H(\bs x_1,\bs x_2)\Big)\bs x_2\\
&:=\bs x_2^\top A(\bs x_1,\bs x_2)\bs x_2, 
\end{align*}
where  
$h_{ij}(\bs x_1,\bs x_2)\to0$
as $\bs x_2\to\bs0$. Hence, 
\begin{eqnarray}\label{A_converge}
A(\bs x_1,\bs x_2)\to \dfrac{1}{2}\nabla^2V|_{\mathcal N_r(\bs x_1)}(\bs 0)
\end{eqnarray}
as $\bs x_2\to\bs0$. 

Since the probability density  $u_\epsilon$ concentrates near $\mathcal K$ as $\epsilon\to 0$, it holds that 
\begin{eqnarray}\label{u=1}
1=\lim_{\epsilon\to0}\int_{\mathcal N_r}u_\epsilon(\bs x)\mathrm{d}\bs x
\end{eqnarray}
By Fubini's theorem, 
\begin{eqnarray*}
\int_{\mathcal N_r}u_\epsilon(\bs x)\mathrm{d}\bs x&=&\int_{\mathcal N_r}Q(\epsilon)^{-1}Z_\epsilon(\bs x_1,\bs x_2)\exp\Big\{-\frac{1}{\epsilon}V(\bs x_1,\bs x_2)\Big\}\mathrm{d}\bs x_1 \mathrm{d}\bs x_2\\
&
=&\int_{\mathcal K}Q(\epsilon)^{-1}\int_{B_r(\bs 0)}Z_\epsilon(\bs x_1,\bs x_2)\exp\Big\{-\dfrac{1}{\epsilon}\bs x_2^\top A(\bs x_1,\bs x_2)\bs x_2\Big\}\mathrm{d}\bs x_2 \mathrm{d}\bs x_1.
\end{eqnarray*}
By the change of variables $\tilde{\bs x}_2=\bs x_2/{\sqrt{\epsilon}}$, the inner integral becomes
\begin{align*}
&\int_{B_r(\bs 0)}Z_\epsilon(\bs x_1,\bs x_2)\exp\Big\{-\dfrac{1}{\epsilon}\bs x_2^\top A(\bs x_1,\bs x_2)\bs x_2\Big\}\mathrm{d}\bs x_2\\
=&\ (\sqrt{\epsilon})^{n-d}\int_{B_{{r}/{\sqrt\epsilon}}(\bs 0)}Z_\epsilon(\bs x_1,\sqrt{\epsilon}\tilde{\bs x}_2)\exp\Big\{-\tilde{\bs x}_2^\top A(\bs x_1,\sqrt\epsilon\tilde{\bs x}_2)\tilde {\bs x}_2\Big\}\mathrm{d}\tilde {\bs x}_2.
\end{align*}
Therefore, 
\begin{eqnarray}\label{integration1}
&&\quad\int_{\mathcal N_r}u_\epsilon(\bs x)\mathrm{d}\bs x\\
&&=\int_{\mathcal  K}Q(\epsilon)^{-1}{\epsilon}^{(n-k)/2}\mathrm{d}\bs x_1\int_{B_{r/\sqrt\epsilon}(\bs 0)}Z_\epsilon(\bs x_1,\sqrt{\epsilon}\tilde{\bs x}_2)\exp\Big\{-\tilde{\bs x}_2^\top A(\bs x_1,\sqrt\epsilon\tilde{\bs x}_2)\tilde {\bs x}_2\Big\}\mathrm{d}\tilde {\bs x}_2.\nonumber 
 \end{eqnarray}

By assumption, 
the Hessian $\nabla^2 V|_{\mathcal N_r(\bs x_1)}(\bs 0)$ is non-degenerate, hence positive definite and invertible. 
Together with \eqref{A_converge}, 
the inner integration on the 
right-hand side of \eqref{integration1}
converges for each $\bs x\in\mathcal K$ to
\begin{align}
&
Z_0(\bs x_1,\bs 0)\int_{\R^{n-k}}\exp\Big\{-\dfrac{1}{2}\cdot\tilde{\bs x}_2^\top(\nabla^2V|_{\mathcal N_r(\bs x_1)}(\bs 0))\tilde{\bs x}_2\Big\}\mathrm{d}\tilde{\bs x}_2\nonumber\\
=&\ Z_0(\bs x_1,\bs 0)\sqrt{\det\big(2\pi(\nabla^2 V|_{\mathcal N_r(\bs x_1)}(\bs 0))^{-1}\big)}:=\ c(\bs x_1)\label{C}
\end{align}
 as $\epsilon\to0,$ where the  equality follows from the Gaussian integral formula. By  continuity of $Z_0(\bs x_1,\bs 0)$ and  compactness of $\mathcal K$, function $C(\bs x_1)$ is continuous and bounded away from zero. 

Combining \eqref{u=1},  \eqref{integration1} and \eqref{C}, it follows that the limit 
\[\lim\limits_{\epsilon\to0}Q(\epsilon)^{-1}\epsilon^{(n-d)/2}\]
exists and satisfies
\begin{eqnarray*}
1=\int_{\mathcal K}\Big(\lim_{\epsilon\to0}\dfrac{Q(\epsilon)}{\epsilon^{(n-d)/2}}\Big)^{-1}c(\bs x_1)\mathrm{d}\bs x_1,
\end{eqnarray*}
which implies  
\begin{eqnarray*}
\lim_{\epsilon\to0}\dfrac{Q(\epsilon)}{\epsilon^{(n-d)/2}}=\int_{\mathcal K}c(\bs x_1)\mathrm{d}\bs x_1:=C_0>0.
\end{eqnarray*}
\end{proof}

According to Lemma \ref{lem:dim_Q},  the constant $C_0^{-1}$ can be absorbed into the prefactor $Z_\epsilon(\bs x)$, so that  the normalization factor can be written as 
\[Q(\epsilon)=C(\epsilon)\epsilon^{(n-d)/2},\]
where $C(\epsilon)\to 1$ as $\epsilon\to0.$ Assuming $C(\epsilon)$ is at least $C^2$ in $\epsilon$, it admits the expansion \[C(\epsilon)=1+C_1\epsilon+O(\epsilon^2),\] and thus 
\begin{eqnarray}\label{Q}
Q(\epsilon)=(1+C_1\epsilon+O(\epsilon^2))\epsilon^{(n-d)/2}.
\end{eqnarray}

In this paper, the prefactor $Z_\epsilon$ is determined up to  first order in $\epsilon$. Hence, one writes 
\begin{equation}\label{WKB1}
 u_{\epsilon}(\bs x) = Q(\epsilon)^{-1}\big(Z_{0}(\bs x) + \epsilon Z_{1}(\bs x) + O( \epsilon^{2})\big)\exp \left ( -\frac{1}{\epsilon} V(\bs x) \right),
\end{equation}
where $Z_0$ agrees with \eqref{Z0}.
Taking the logarithm of both sides of \eqref{WKB1} and substituting the expansion \eqref{Q} for $Q(\epsilon),$ one obtains
\begin{equation}
  \label{WKBlog}
  \log u_{\epsilon}(\bs x) +\dfrac{n-d}{2}\log\epsilon =  \log Z_{0}(\bs x)  + \epsilon\Big(
  \frac{Z_{1}(\bs x)}{Z_{0}(\bs x)}-C_1\Big) - \frac{1}{\epsilon} V(\bs x)+
  O(\epsilon^{2}).
\end{equation}

Given the Monte Carlo  data of $u_\epsilon$ for multiple values of $\epsilon$, \eqref{WKBlog} can be employed to approximate $V$ and $Z_{0}$. To be specific,  perform  $K$ $(K>3)$ Monte Carlo simulations under 
the  noise strengths $\epsilon_{1}, \cdots, \epsilon_{K}$. For every $\epsilon_{i}$, estimate the invariant
probability density at each collocation point $\bm x^{*}$, denoting the  empirical density by $\hat{u}_{\epsilon_{i}}(\bs x^{*})$. Then solve the  least square optimization problem 
\begin{equation}
  \label{WKBlsq}
  {\bm\beta}^* = \arg \min_{{\bm \beta}} \|\mathcal{A} {\bm \beta} - \mathbf{\hat{u}} \|^2\,,
\end{equation}
where
\begin{eqnarray}\label{matrix_A}
 \mathcal{A} = 
\begin{pmatrix}
  -{1}/{\epsilon_{1}} & 1 & \epsilon_{1}\\
  \vdots &\vdots & \vdots\\
  -{1}/{\epsilon_{K}} & 1 & \epsilon_{K}
\end{pmatrix}\in \mathbb{R}^{K\times 3},
\end{eqnarray}
and 
\begin{eqnarray}\label{hat_u}
\mathbf{\hat{u}}=\big(\log \hat{u}_{\epsilon_{1}}(\bm x^{*}) +
  \frac{d-n}{2}\log \epsilon_{1},
  \, \cdots,  \, \log \hat{u}_{\epsilon_{K}}(
  \bm x^{*}) + \frac{d-n}{2}\log \epsilon_{K}\big)^{T}.
\end{eqnarray}

Let 
\begin{eqnarray}\label{beta}
 {\bm \beta}_0 =\Big(V(\bm x^{*}),\log Z_{0}(\bm x^{*}), 
\frac{Z_{1}(\bm x^{*})}{Z_{0}(\bm x^{*})}-C_1\Big)^{T}
\end{eqnarray}
denote the ground truth. 
The discussion above leads to 
\begin{eqnarray}\label{hat_u_eq}
 \mathbf{\hat{u}}  = \mathcal{A} {\bm \beta}_0 + \mathbf{s}_0 + \mathbf{s}_1,   
\end{eqnarray}
where $\mathbf{s}_0$ represents the sampling error from the Monte Carlo simulations, and $\mathbf{s}_1$ is an $O(\epsilon^2)$ term arising from the higher-order perturbation effects. 

It is well known that equation \eqref{WKBlsq} admits a least square solution (see \cite{bjorck2024numerical}), given by 
\begin{equation*}
  {\bm \beta}^{*}_0  = ( \mathcal{A}^{T} \mathcal{A})^{-1} \mathcal{A}^{T}
  \mathbf{\hat{u}}.
\end{equation*}
This yields a Monte Carlo-based  approximation of the quasi-potential
function $V(\bs x)$ and the leading-order term $Z_0(\bs x)$ in the prefactor. 

\subsection{Statistical validation of WKB approximation}\label{sec:WKB_statistical_validation}
Since a rigorous   justification  for the WKB approximation is not available in general, 
it is necessary to verify whether the invariant  density
function $u_{\epsilon}$ indeed admits the WKB form
\eqref{WKB}. In principle, this can be tested by
evaluating the residual term 
\begin{align*}
    \mathbf{r}_0:= \mathcal{A} {\bm \beta}^{*}_0 - \mathbf{\hat{u}}, 
\end{align*}
where $\mathcal{A}$, ${\bm \beta}^{*}_0$, and $\mathbf{\hat{u}}$ are defined in Section~\ref{sub:linear_regression}. 
Specifically, if the squared residual norm $\| \mathbf{r}_0 \|^{2}$, also known as the residual sum of squares (RSS), is unacceptably large at  many collocation points,
then the hypothesis that
$u_{\epsilon}({\bm x})$ admits a valid WKB expansion should be statistically rejected. 

To assess the reliability of the residual analysis, it is first necessary to estimate the uncertainty in the values of  $\log\hat{u}_{\epsilon_{i}}(
\bs x^{*})$ for each $i = 1, \cdots , K$, that is,  the entries of $\mathbf{s}_0$ in \eqref{hat_u}. As described in Section~\ref{sub:linear_regression}, $\hat{u}_{\epsilon_{i}}( {\bm x}^{*})$ is estimated by computing the
proportion of sample points falling inside the box centered at
${\bm x}^{*}$, with volume denoted by $h.$ Let $N$ denote the total number of samples,  and let $N_{0,i}$ be the number of those falling into the box in the $i$-th Monte Carlo simulation. Assuming that 
the samples are independently drawn from 
the invariant probability distribution, the count  $N_{0,i}$ follows a binomial distribution $B(N, u_{\epsilon_{i}}({\bm x}^{*})h)$. Applying  a normal approximation to the
binomial distribution yields
$$
  N_{0,i}\sim_{\rm{approx}}\mathcal{N}\big( Np_i ,Np_i(1-p_i)\big),
$$
where $p_i=u_{\epsilon_{i}}({\bm x}^{*})h$.
Since 
$\hat{u}_{\epsilon_{i}}({\bm x}^{*}) = N_{0,i}/(hN)$, one has $\hat p_i\approx N_{0,i}/N$, and consequently,
\begin{eqnarray*}
\hat{u}_{\epsilon_{i}}({\bm x}^{*})\sim_{\rm{approx}}\mathcal{N}\big(u_{\epsilon_i}(\bs x^*),N_{0,i}(1 -
N_{0,i}/N)h^{-2}N^{-2}\big).
\end{eqnarray*}
Hence, the estimation error
 in $\hat{u}_{\epsilon_{i}}({\bm x}^{*})$ can be approximated by a normal
distribution with zero mean and variance $N_{0,i}(1 -
N_{0,i}/N)h^{-2}N^{-2}$. Since this variance is typically small, the
corresponding error  in the logarithmic scale,
\[\delta_{i}({\bm x}^{*})=\log
\hat{u}_{\epsilon_{i}}({\bm x}^{*}) - \log
 u_{\epsilon_{i}}({\bm x}^{*})\] 
 can be approximated via a Taylor expansion. 
A straightforward calculation then gives 
\begin{align*}
\delta_{i}(\bm x^{*})\sim_{\rm{approx}} \mathcal{N}\big(0,(1-N_{0,i}/N)N_{0,i}^{-1}\big). \end{align*}
Therefore, it is reasonable to assume that 
$$
    \mathbf{s}_{0,i} = \Big(\sqrt{(1 - N_{0,i}/N)N_{0,i}^{-1}}\ \Big)Z_i,\quad \text{for}\ i=1,\ldots,K,
$$
where $Z_i$ are  independent and identically distributed standard normal random variables. 

Note that the entries of $\mathbf s_{0, i}$ have distinct variances. As a result, it is difficult to express the residual $\mathbf r_0$ in closed form. To address this issue, it is necessary to rescale the rows of the linear system  so that  all  entries of Monte Carlo error term become independent standard normal random variables. Define $D$ to be the diagonal matrix given by 
$$
D = \mbox{diag}\Big\{\sqrt{(1 - N_{0,1}/N)^{-1}N_{0,1}}\ ,\  \cdots,\  \sqrt{(1 - {N_{0,K}}/{N})^{-1}N_{0,K}}\Big\}.
$$
With this rescaling, the rescaled least square problem takes the form
\begin{eqnarray}\label{rescale_least_square}
{\bm\beta}^* = \arg \min_{\bm{\beta}} \|D\mathcal{A} \bm{\beta} - D\mathbf{\hat{u}} \|^2,
\end{eqnarray}
and the corresponding residual is given by
\begin{eqnarray}\label{rescaled_r}
\mathbf{r}= D \mathcal{A} \mathbf{\bs\beta}^{*} - D
    \mathbf{\hat{u}}.
\end{eqnarray}

\begin{prop}\label{prop:CI}
Let $\hat{\mathbf{u}}$ be defined as in \eqref{hat_u}, and let $\mathbf{r}$ be the rescaled residual given by \eqref{rescaled_r}. Then
$$
 \mathbf{r} = \mathbf{r}_0 + \mathbf{r}_1, 
$$
where $\|\mathbf{r}_0\|^2\sim\chi^2(K-3),$ and 
\[
    \|\mathbf{r}_1\| \leq \|(I - P_A)D\|O(\epsilon^2)
\]
with the projection matrix 
\[P_A = D\mathcal{A}( \mathcal{A}^{T} D^2\mathcal{A})^{-1}\mathcal{A}^TD.\]
\end{prop}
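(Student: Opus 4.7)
The plan is to treat this as a standard weighted least squares calculation: identify $\mathbf{r}$ as the image of $D\hat{\mathbf{u}}$ under the orthogonal projection onto the orthogonal complement of the column space of $D\mathcal{A}$, and then split the contribution coming from the sampling noise $\mathbf{s}_0$ from the contribution coming from the $O(\epsilon^2)$ model error $\mathbf{s}_1$.

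First I would write the closed-form solution of \eqref{rescale_least_square}, namely
\[
   \bm{\beta}^{*}=(\mathcal{A}^{T}D^{2}\mathcal{A})^{-1}\mathcal{A}^{T}D^{2}\hat{\mathbf{u}},
\]
so that $D\mathcal{A}\bm{\beta}^{*}=P_{A}D\hat{\mathbf{u}}$ and hence
\[
   \mathbf{r}= -\,(I-P_{A})\,D\,\hat{\mathbf{u}}.
\]
Matrix $P_{A}$ is, by its very form, the orthogonal projector onto the column space of $D\mathcal{A}$, a subspace of dimension $3$ (assuming the three columns of $\mathcal{A}$ are linearly independent, which is immediate from \eqref{matrix_A} whenever $K\ge 3$ and the $\epsilon_{i}$ are distinct). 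Consequently $I-P_{A}$ is an orthogonal projector of rank $K-3$.

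Next I would substitute the identity \eqref{hat_u_eq}, $\hat{\mathbf{u}}=\mathcal{A}\bm{\beta}_{0}+\mathbf{s}_{0}+\mathbf{s}_{1}$, into the residual expression. Because $(I-P_{A})\,D\mathcal{A}=0$ (the columns of $D\mathcal{A}$ lie in the range of $P_{A}$), the deterministic part drops out and
\[
   \mathbf{r}= -\,(I-P_{A})\,D\,\mathbf{s}_{0}\;-\;(I-P_{A})\,D\,\mathbf{s}_{1}
   \;=:\; \mathbf{r}_{0}+\mathbf{r}_{1}.
\]

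For $\mathbf{r}_{0}$, I would verify the key cancellation produced by the rescaling in Section~\ref{sec:WKB_statistical_validation}. From $\mathbf{s}_{0,i}=\sqrt{(1-N_{0,i}/N)N_{0,i}^{-1}}\,Z_{i}$ and $D_{ii}=\sqrt{(1-N_{0,i}/N)^{-1}N_{0,i}}$, a direct multiplication gives $(D\mathbf{s}_{0})_{i}=Z_{i}$, so that $D\mathbf{s}_{0}\sim\mathcal{N}(0,I_{K})$ under the approximation adopted in Section~\ref{sec:WKB_statistical_validation}. Then the classical fact that $\|PZ\|^{2}\sim\chi^{2}(\mathrm{rank}\,P)$ whenever $P$ is an orthogonal projector and $Z\sim\mathcal{N}(0,I_{K})$ yields
\[
  \|\mathbf{r}_{0}\|^{2}=\|(I-P_{A})\,D\mathbf{s}_{0}\|^{2}\sim\chi^{2}(K-3).
\]
For $\mathbf{r}_{1}$ the estimate is purely deterministic:
\[
  \|\mathbf{r}_{1}\|=\|(I-P_{A})D\mathbf{s}_{1}\|\le \|(I-P_{A})D\|\,\|\mathbf{s}_{1}\|=\|(I-P_{A})D\|\,O(\epsilon^{2}),
\]
since $\mathbf{s}_{1}$ collects the $O(\epsilon^{2})$ remainders in \eqref{WKBlog}.

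The only nontrivial step, and the one I would pay most attention to, is the probabilistic one: asserting $D\mathbf{s}_{0}\sim\mathcal{N}(0,I_{K})$ is an approximation built on two layers (normal approximation to the binomial counts and Taylor expansion of the logarithm), so the $\chi^{2}$ conclusion is an approximate distributional statement inheriting those caveats. The linear-algebraic steps (the projector identity, the cancellation $(I-P_{A})D\mathcal{A}=0$, and the rank-$(K-3)$ count) are routine and require only that $\mathcal{A}$ have full column rank.
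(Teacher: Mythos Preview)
Your proposal is correct and follows essentially the same approach as the paper: identify $\mathbf{r}$ as $-(I-P_A)D\hat{\mathbf{u}}$, substitute the decomposition \eqref{hat_u_eq}, use $(I-P_A)D\mathcal{A}=0$ to kill the $\bm{\beta}_0$ term, and split into $\mathbf{r}_0$ and $\mathbf{r}_1$. The paper makes the $\chi^2$ step explicit via the factorization $I-P_A=UU^T$ with $U\in\mathbb{R}^{K\times(K-3)}$ orthonormal, but this is exactly the ``classical fact'' you invoke; your added remarks on the approximate nature of the distributional claim are accurate and worth keeping.
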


\begin{proof}
For notational convenience, let $\mathcal{A}_1:= D \mathcal{A}$ and $\mathbf{\hat{u}_1}:=D\mathbf{\hat{u}}.$ The least square solution to \eqref{rescale_least_square} is
given by
$$
    {\bm \beta}^* = ( \mathcal{A}_1^{T} \mathcal{A}_1)^{-1} \mathcal{A}^{T}_1
 D\mathbf{\hat{u}}.
$$
Hence, the fitted value is 
$$
\mathcal{A}_1{\bm \beta}^* = \mathcal{A}_1( \mathcal{A}_1^{T} \mathcal{A}_1)^{-1} \mathcal{A}^{T}_1
\mathbf{\hat{u}_1} := P_A\mathbf{\hat{u}_1},
$$
where $P_A$ is the orthogonal projection onto the column space of $\mathcal{A}_1$. 
Using the decomposition of $\mathbf{\hat{u}}$ in \eqref{hat_u_eq},  the scaled residual becomes 
$$
\mathbf{r} =(P_A-I)\mathbf{\hat{u}_1} =(P_A-I)( \mathcal{A}_1 {\bm \beta}_0 + \mathbf{Z} + D \mathbf{s}_1 )\,,
$$
where the entries of $\mathbf{Z}$ are independent standard normal random variables. Since $( P_A-I)\mathcal{A}_1  = 0$, it follows that
$$
\mathbf{r} = (P_A-I)\mathbf{Z} + (P_A-I) D \mathbf{s}_1 := \mathbf{r}_0 + \mathbf{r}_1.
$$
Note that $(P_A-I)$ is a projection matrix of rank $(K - 3)$. Therefore, there exists an orthonormal matrix $U \in \mathbb{R}^{K\times (K - 3)}$ such that \[{(P_A-I)}\mathbf{Z} = UU^T \mathbf{Z}.\] Since $\mathbf{Z}\sim\mathcal N(0,I_K)$, it follows that $U^T\mathbf{Z}\sim\mathcal N(0,I_{K-3})$, and hence 
\[\|\mathbf{r}_0\|^2 = \|U^T \mathbf{Z}\|^2\sim\chi^2(K-3).\] 
Finally, since $\mathbf{s}_1=O(\epsilon^2)$, it follows that
$$
    \|\mathbf{r}_1 \| \leq \| (P_A-I)D\| O(\epsilon^2).
$$
\end{proof}

Proposition \ref{prop:CI} establishes a statistical criterion for assessing the  WKB approximation
 of invariant distribution $u_\epsilon$. 
When $\epsilon$ is sufficiently small, the $\epsilon^2$-terms becomes negligible, and the rescaled RSS, $\| \mathbf{r} \|^{2}$, can be evaluated at all collocation points. If the computed rescaled RSS follows a $\chi^{2}$ distribution with $K-3$ degrees of freedom, then
$u_{\epsilon}$ can be regarded as  statistically consistent with  the WKB form. 

\begin{rmk}
{\rm 
In practice, factors other than the $\epsilon^2$-term can also affect the probability distribution of RSS. First,  Monte Carlo samples are generated from a long trajectory of \eqref{SDE},  introducing temporal correlations  
rather than  truly independent and identically distributed. To reduce these correlations, samples should be taken with sufficiently large time intervals, which lowers the effective sample size below $N_{0}$ and increases the relative error 
beyond that predicted by the binomial distribution, 
resulting in a higher observed RSS. Second, when the probability density varies significantly within a bin, the  estimation becomes less accurate. This occurs because the collocation point is assumed to lie at the bin center, whereas samples may cluster near the edges or corners. Therefore, 
for reliable statistical validation of the WKB approximation, the noise magnitude $\epsilon$ should be significantly larger than the bin size. 
}
\end{rmk}

The following Algorithm \ref{alg:WKB} provides  a statistical procedure to decide whether the WKB approximation holds for the invariant distribution of SDE \eqref{SDE}.

\begin{algorithm}
\caption{Statistical validation of WKB approximation}
\label{alg:WKB}
\begin{algorithmic}
\Require Stochastic differential equation \eqref{SDE}
\Require Collocation points ${\bm x}^* _1, \cdots, {\bm x}^* _M$
\Ensure Whether WKB approximation holds (Y or N)
\State Choose $K(K>3)$ small numbers $\epsilon_1, \cdots, \epsilon_K$
\For{ $i = 1$ to $K$}
\State Let the strength of noise in equation \eqref{SDE} be $\epsilon_i$
\State Run Monte Carlo simulations of \eqref{SDE} to estimate the probability densities at collocation points $j = 1, \cdots, M$, denoted by  $\hat{u}_{\epsilon_i}({\bm x}^* _j) = N_{0, j}/(hN).$
\EndFor
\For{ $j = 1$ to $M$}
\State Build the rescaling matrix $D$ with $D_{ii} = \sqrt{(1 - {N_{0,i}}/{N})^{-1}N_{0,i}}$
\State Build matrix $\mathcal{A}$ as in \eqref{matrix_A}
\State Solve the normal equation $D \mathcal{A} \mathbf{x} = D \hat{\mathbf{u}}$
\State Calculate RSS $\|\mathbf{r}\|^2$
\If {$\|\mathbf{r}\|^2$ statistically satisfies the $\chi^2$-distribution with $(K-3)$ degrees of freedom}
\State WKB approximation holds
\Else
\State WKB approximation does not hold
\EndIf
\EndFor
\end{algorithmic}
\end{algorithm}

\section{Deep neural network solver for quasi-potential function}\label{sec:V}
\subsection{Construction of training set and loss functions}
The quasi-potential function $V$ is approximated by  an artificial  neural network, denoted by $V_{\bm \theta}({\bm x})$, where ${\bm \theta}$ denotes the set of trainable network parameters. The training set consists of a collection of collocation points, which serves as the input set, together with the corresponding approximations of $V$ at these points, serving as the output set.  To captures the properties of $V$ in different regions of the domain, the  training set is constructed in the following three components. 
\begin{enumerate}
    \item {Training set for the attractor $\mathcal{K}$}.
    \\
The input set for the first part of the training set, denoted by $\mathbf{X}_1:= \{ {\bm x}^a_1, \dots, {\bm x}^a_{M_1} \}$, consists of points sampled from the global attractor $\mathcal{K}$ on which the quasi-potential $V$ vanishes. These points are obtained by numerically integrating the deterministic system \eqref{ODE}  until the trajectory converges  sufficiently close to the attractor. A subset of points  $\{{\bm x}^a_i\}$ is then randomly sampled from the trajectory and assigned zero output values, yielding the  training set $(\mathbf{X}_1, \mathbf{0})$. This component ensures that the neural network accurately captures the vanishing property of $V$ on the attractor. 

    \vspace{1mm}

    \item {Training set for WKB approximations}. \\  
Since the invariant distribution of \eqref{SDE} is primarily concentrated near the attractor, a large number of collocation points is required in that region. To this end, the collocation points for the second part of the training set, $\mathbf{X}_2 := \{ {\bm x}^* _1, \dots, {\bm x}^* _{M_2} \}$,  
 consists of two subsets:  one obtained by sampling from long trajectories of the stochastic system \eqref{SDE}, yielding points concentrated near the attractor; and the other generated by uniformly sampling over the entire numerical domain. 
The full set of collocation points $\{\bs x_i^*\}$ are generated  
using the algorithm developed in \cite{zhai2022deep}, wherein half of the collocation points are sampled from a long trajectory of equation \eqref{SDE}, while the other half are uniformly distributed in the domain. The corresponding output values, denoted by  $\hat{V}({\bm x}^* _i)$, are computed using the linear regression procedure introduced in Section \ref{sub:linear_regression}, 
based on the WKB expansions of the invariant  density. The resulting training set $({\mathbf X}_2, \hat{V}({\mathbf X}_2))$ guides the neural network to approximate the WKB solution of the stationary Fokker-Planck equation \eqref{stationary_FPE}. 

  \vspace{1mm}

    \item {Training set for operator residual}.
    \\
    The third part of the training set  is used to control the residual of the Hamilton-Jacobi operator. Denote the input set by $\mathbf{X}_3:$ $= \{ {\bm x}^h_1, \dots, {\bm x}^h_{M_3}\}$, where the collocation points 
    $\{\bs x_i^h\}$ are  generated in a manner similar to those in $\mathbf{X}_2$, but in significantly larger quantity, as no output values are required for these points.    
    Since the quasi-potential $V$ satisfies the Hamilton-Jacobi equation \eqref{HJE}, the neural network is  trained to minimize the residual of the associated operator, defined at each collocation point by    
    \[
  \mathcal{H}V_{\bm \theta}({\bm x}^h_i) = \sum_{i=1}^{n}f^{i} \partial_i V_{\bm \theta}({\bm x}^h_i) +
    \frac{1}{2} \sum_{i,j = 1}^{n} a^{ij} \partial_iV_{\bm \theta}({\bm x}^h_i)
    \partial_j V_{\bm \theta}({\bm x}^h_i).
    \]
    
    \end{enumerate}
\medskip

Corresponding to the three training sets above, the loss function for  the neural network for approximation of $V$ consists of the following three components:
\begin{enumerate}
    \item Loss function corresponding to  ${\mathbf X}_1$: 
    $$
        L_1^V = \frac{1}{M_1}\sum_{i = 1}^{M_1} V_{\bm \theta}({\bm x}^a_i)^2;
    $$
    
    \item Loss function corresponding to ${\mathbf X}_2$: 
    $$
        L_2^V = \frac{1}{M_2} \sum_{i = 1}^{M_2} ( V_{\bm \theta}({\bm x}^* _i) - \hat{V}({\bm x}^* _i))^2;
    $$

    \item Loss function corresponding to ${\mathbf X}_3:$ 
\[L_3^V = \frac{1}{M_3} \sum_{i = 1}^{M_3} ( \mathcal{H}V_{\bm \theta}({\bm x}^h_i) )^2.
\]
\end{enumerate} 

The terms $L_1^V$ and $L_2^V$ guide the neural network in fitting the quasi-potential function to its  reference values at the collocation points, whereas     
$L_3^V$ enforces the Hamilton-Jacobi equation satisfied by $V$ by penalizing its residual. 

\subsection{Strategies in neural network training}
 The quasi-potential  $V$ satisfies the Hamilton-Jacobi equation \eqref{HJE}, which, however,  admits a trivial solution $V = 0$. In practice, training the neural network only using the Monte Carlo simulation data  often results in  convergence to this trivial solution. Moreover, since $V$ appears in  the exponent of the WKB expansion, even a small error in its approximation near the attractor can lead to  significant inaccuracies in the resulting approximation of $u_\epsilon$. To mitigate these issues, the following strategies are adopted to
improve  training performance.  

\begin{enumerate}
    \item {Collocation points on the attractor:}    
    As described in the first part of the training set construction, a subset of collocation points $\mathbf{X}_1$ is sampled from the attractor. A corresponding loss term $L_1^V$ is introduced
    to enforce that the learned quasi-potential $V$ remains close to zero on the attractor $\mathcal{K}$.    
    This acts as a strong regularization mechanism. 

     \vspace{1mm}
     
    \item Penalty term: To discourage the quasi-potential from taking negative values, penalty terms of the form $\max\{ -V, 0\}$ are  incorporated into the loss functions $L_1^V$ and $L_2^V$ during the training.        
         
\vspace{1mm}
    
    \item Artificial values far away from the attractor:   Most 
    collocation points obtained from  Monte Carlo simulations are  concentrated near the attractor, which may lead to poor neural network performance in regions  farther away and increase the risk of convergence to the trivial solution. To address this, additional collocation points located far  from the attractor are randomly sampled, as described in the construction of the set $\mathbf {X}_2.$ However, due to the rapid decay of the Monte Carlo accuracy
    in low-density regions where $u_\epsilon$ is small, 
    reliable outputs are  only obtained when the quasi-potential $V(\bm x^*)$ is below a certain threshold. To compensate for the lack of reliable data in these regions, 
    an artificial value $C$ is assigned;  that is,  the estimated quasi-potential is set to $C$ when the true quasi-potential exceeds this threshold.    
    
     \vspace{1mm}
    
    \item Alternating Adam training method: Since the loss components $L_1^V, L_2^V$, and $L_3^V$ may differ significantly in scale, the Adam optimizer is applied in an  alternating fashion to each component during training, following the strategy proposed in \cite{zhai2022deep}.    
    This approach exploits  the near scale-invariance of the Adam optimizer (see \cite{kingma2014adam}) and is  straightforward to implement in practice. 

     \vspace{1mm}
         
    \item Fine-tuning procedure: In the final stage of training,   once the loss has approximately converged, the neural network is further trained for a few additional epochs using only the loss components $L_1^V$ and $L_3^V$, with a significantly reduced learning rate. This  fine-tuning step helps suppress the residual noise from the Monte Carlo simulation data and enhances the overall accuracy of the learned solution. 
\end{enumerate}

\subsection{Training set expansion}
As discussed in the previous subsection,  Monte Carlo simulation provides reliable approximations only in regions where the invariant density function is not too small (typically greater than $10^{-6}$), and hence collocation points with artificially assigned output values are introduced.  However, this restricts  further exploration of the quasi-potential landscape. To overcome this limitation, the training set is expanded by adding collocation points  located far from the attractor (i.e., in  low-density regions), together with their corresponding quasi-potential values. These additional data are incorporated into the  loss component $L_2^V$ to enhance the accuracy of the neural network training. 

We remark that after the training set expansion, it is crucial to continue training from the previously learned neural network parameters, rather than restarting training from scratch. This approach is consistent with the principle of curriculum learning in machine learning \cite{bengio2009curriculum}.
In addition, the artificial values initially assigned to collocation points far from the attractor should be updated to reflect the evolving approximation of the quasi-potential during training.

In this paper, two approaches are employed to expand the training set:  the method of characteristics and the minimal action method. 

\vspace{1mm}

{\bf Method of characteristics.} This approach leverages the fact that the Hamilton-Jacobi equation \eqref{HJE} can be solved along characteristic curves. Let $\bs p = \nabla V$, and the Hamiltonian associated with the equation \eqref{HJE} is given by
\[H = f^T\bs p + \frac{1}{2} \bs p^T A\bs p.\] It can be readily  to verify that $\dot H=0$ along the characteristic system 
\begin{eqnarray}\label{char}
\left\{
\begin{aligned}  
    \dot{\bs x} &= \nabla_{\bs p} H = f + A\bs p\\
    \dot{\bs p} &= -\nabla_{\bs x} H = -(\nabla f)^T\bs p
\end{aligned}\right.
\end{eqnarray} 
where $\nabla f \in \mathbb{R}^{n\times n}$ denotes the Jacobian of the vector field $f$. Note that \eqref{char} defines  a Hamiltonian system with  conserved quantity $H$. A direct calculation then yields
\begin{equation}
    \label{charV}
    \dot{V} = {\nabla_{\bs x} V}\cdot \dot{\bs x} = \bs p \cdot \dot{\bs x} = \frac{1}{2}\bs p^T A \bs p.
\end{equation}

By solving the characteristic system \eqref{char}, the equation \eqref{charV} facilitates the expansion of the training set to a larger region as follows. First,  collocation points are sampled from regions where the quasi-potential $V$ has  been determined, and the corresponding derivatives $p = \partial V / \partial x_i$ are obtained from the  neural network trained in the previous subsection. 
Next, integrating  \eqref{char} starting from each of these points extends the solution $V$ into   regions beyond the 
reach of Monte Carlo simulations. The newly generated collocation points are  then added to the training set $\mathbf{X}_2$, enabling the removal of the original collocations points with artificial assigned values located far from the attractor. This procedure can be repeated iteratively to further expand the training set if necessary. Since  \eqref{char} defines a Hamiltonian system, a symplectic integrator (see Section \ref{sec:symplectic scheme}) should be used to avoid the  rapid accumulation of numerical errors. 

\medskip

 {\bf Minimum action method.} This approach exploits the fact that the Freidlin-Wentzell quasi-potential $V$  satisfies the minimum action principle. More precisely, $V$ admits the  variational characterization 
$$
    V(\bs x) = \inf_{T > 0}\inf_{\substack{\phi\in C[0, T],\\\phi(0) \in \mathcal K, \phi(T) = \bs x}}\left \{ \int_0^T \frac{1}{2} (\dot{\phi} - f)^T A^{-1} (\dot{\phi} - f) \mathrm{d}s\right\}, 
$$
where the integral is known as the action functional of the path $\phi$. Similar to the method of characteristics, once the minimizing path $\phi$ is found for a given $\bs x \in \mathbb{R}^n$, the value $V(\phi(t))$ can be evaluated for all $t \in [0 , T]$. 

If the attractor is  a stable equilibrium, then $V(\bs x)$ can be numerically computed by applying the geometric minimum action method (gMAM). However, for a general attractor, gMAM becomes less  effective because the  minimizing path $\phi$ is often infinitely long. In such cases, it is more practical to  start from an energy surface defined by 
\[\mathcal S := \{\bs x \in \mathbb{R}^n \,|\, V(\bs x) = C \}.\] It is straightforward to verify that for any $\bs x$ with $V(\bs x) > C$, the quasi-potential admits  the  variational representation 
$$
    V(\bs x) = C + \inf_{T > 0}\inf_{\substack{\phi\in C[0, T],\\ \phi(0) \in \mathcal S, \phi(T)=\bs x}}\left \{ \int_0^T \frac{1}{2} (\dot{\phi} - f)^T A^{-1} (\dot{\phi} - f) \mathrm{d}s\right\}.
$$
That is,  the infimum is taken over all paths starting from the  energy surface. In practice, a collection of initial points $\bs y_1, \cdots, \bs y_p$ are sampled from $\mathcal S$, and the minimum action is approximated by minimizing over the corresponding action functionals from each $\bs y_i$ to $\bs x$. 

Although the minimum action method incurs higher computational cost, it serves as a valuable complement to the method of characteristics,  which lacks explicit control over the directions of the characteristic curves and may consequently fail to cover certain regions of the training domain.  In such cases,  additional collocation points are sampled from the uncovered regions, and gMAM is applied to compute  minimizing paths to these  points, thereby enhancing the coverage and quality of the training samples. 

\section{Deep neural network solver for prefactor function}\label{sec:Z}
\subsection{Training set and loss functions}
In the small noise regime, learning the leading-order term $Z_0(\bs x)$  is sufficient to accurately estimate the prefactor  $Z_\epsilon(\bs x)$. 
Similar to the quasi-potential training, the training set for $Z_0(\bs x)$ consists of three parts, with the corresponding input sets given by 
\begin{enumerate}
\item  A subset of collocation points $\mathbf{Y}_1:= \{\bs y^a_1, \cdots, \bs y^a_{M_1} \}$ located on the attractor $\mathcal{K}$;
 
\item A subset of collocation points
$\mathbf{Y}_2 := \{\bs y^*_1, \cdots, \bs y^*_{M_2} \}$ generated by Monte Carlo simulation;  

\item A subset of collocation points $\mathbf{Y}_3:=\{\bs y^l_1, \cdots, \bs y^l_{M_3} \}$ used to evaluate the residual of the PDE operator $\mathcal{L}^0$, as defined in \eqref{transZ}.
\end{enumerate}

The sampling strategy for $\mathbf{Y}_2$ and $\mathbf{Y}_3$ is the same as that used for training $V$: half of the collocation points are extracted from a long trajectory of the stochastic dynamics \eqref{SDE}, while the other half are sampled uniformly over the entire numerical domain.  

The output values $\hat{Z}_0(\bs y^*_i)$  for the training set $\mathbf{Y}_2$ are obtained from the least square solution of the optimization problem \eqref{WKBlsq}, given by the  exponential of the second entry in \eqref{beta}. 
For 
the output values on the training set $\mathbf{Y}_1$, noting that $V=0$ on the attractor $\mathcal K,$ the WKB expansion of the invariant density $u_\epsilon$ on $\mathcal{K}$ reduces to
$$
    u_\epsilon(\bs x) = Q(\epsilon)^{-1}(Z_0(\bs x) + \epsilon Z_1(\bs x) + O(\epsilon^2)
$$
where, by Lemma \ref{lem:dim_Q}, $Q(\epsilon) = \epsilon^{(n-d)/2}$ for sufficiently small $\epsilon>0$. 
A linear regression of $\hat{u}_{\epsilon_i}(\bs y_i^a)$ over multiple  noise magnitudes $\epsilon_1,\ldots,\epsilon_K$ yields $\hat Z_0(\bs y^a_i)$ via  linear extrapolation to $\epsilon = 0$. 
This provides a better estimate of $Z_0$, as it avoids the uncertainties of $V$ and the errors from the Taylor expansion.

Analogous to the quasi-potential $V$, the loss function for $Z_0$ consists of the following three components:
\begin{enumerate}
    \item Loss function corresponding to $\mathbf{Y}_1$:
    $$
        L^Z_1 = \frac{1}{M_1}\sum_{i = 1}^{M_1} (Z_0(\bs y^a_i) - \hat{Z}_0(\bs y^a_i) )^2;
    $$

    \item Loss function corresponding to $\mathbf{Y}_2:$
    $$
        L^Z_2 = \frac{1}{M_2} \sum_{i = 1}^{M_2} ( Z(\bs y^*_i) - \hat{Z}(\bs y^*_i))^2; 
    $$
   
    \item Loss function corresponding  to $\mathbf{Y}_3:$
    $$
        L^Z_3 = \frac{1}{M_3} \sum_{i = 1}^{M_3} ( \mathcal{L}^0 Z(\bs y^l_i) )^2.
    $$
\end{enumerate} 
 
 It is important to note that  the operator $\mathcal{L}^0$ involves the quasi-potential $V$ 
 and utilizes the previously trained  $V_{\bm \theta}$ from Section \ref{sec:V}. However, the Hamilton-Jacobi equation for $V$ is invariant under constant scaling. As a result, the neural network  output after fine-tuning is typically $\alpha V$ rather than $V$ itself, where the scaling factor $\alpha$ is usually slightly less than $1$. The factor $\alpha$ can be estimated by comparing the trained values  $V_{\bm \theta}({\bs x}_i^*)$ with the corresponding reference values $\hat V(\bm x_i^*)$ at collocation points and then averaging the  ratios. 

\subsection{Training set expansion}\label{sec:training set expand}Similar to the case of quasi-potential, the training set for $Z_0(\bs x)$ can also be expanded by solving $Z_0$ along its characteristics curves. Recall that $Z_0$ satisfies the transport equation 
\begin{eqnarray*}
0 &=&\sum_{i = 1}^{n} \left( f^{i} + \sum_{j = 1}^{n} a^{ij}
     \partial_j V
     \right ) \partial_iZ_{0}\\
     &\ &+ \left ( \sum_{i = 1}^{n} \partial_if^{i} + \frac{1}{2}
     \sum_{i,j = 1}^{n} a^{ij} \partial^{2}_{ij} V + \sum_{i,j =
     1}^{n}\partial_{i} a^{ij} \partial_jV\right )Z_{0}\\
     &:=& \sum_{i = 1}^n b^i \partial_iZ_0 + c Z_0. 
\end{eqnarray*}

Let $\omega(s)$ be the characteristic curve defined by 
$$
\dot{\omega}(s) = b(\omega(s)),
$$
where $b=(b^i).$
Along the curve $\omega(s),$ the function $Z_0$ satisfies the ordinary differential equation 
$$
\frac{\mathrm{d}}{\mathrm{ds}} Z_0(\omega(s)) + c Z_0(\omega(s)) = 0.
$$
This determines $Z_0$ on each characteristic curve. Since $\omega(s)$ is also the characteristic curve of $V$, i.e., the $x$-component of equation \eqref{char}, one only needs to evaluate $c$ along $\omega(s)$ and  compute
$$
Z_0(\omega(s)) = Z_0(\omega(0))\exp\Big\{-\int_0^s c(\omega(r))\mathrm{d}r\Big\}.
$$
This extends the training set of $Z_0$ from a neighborhood of the attractor to a large region, thereby  improving the quality of the training. 

\subsection{Neural network training}\label{sec:NN_Z}
The neural network training for $Z_0$ follows a similar procedure as that for the quasi-potential $V$. The Adam optimizer is applied alternatively to  the three loss components $L^Z_1,L^Z_2$, and $L^Z_3.$ To enforce 
non-negativity of $Z_0$, the penalty term 
$\max\{ -Z_0, 0\}$ is incorporated into the losses $L^Z_1$ and $L^Z_2$.  The training is finally fine-tuned using only $L^Z_1$ and $L^Z_3$.  

Compared to the quasi-potential $V$, the training accuracy for the prefactor $Z_0$ deteriorates when farther from the attractor. This is due to the fact that the ground truth data of $Z_0$, obtained from the Monte Carlo simulations, is available only in the vicinity of the attractor. Reference data for more distant regions are generated by expanding the training set, where both the loss $L^Z_3$ and the characteristic curves of $Z_0$ depend on the first and second derivatives of $V$. These derivatives are generally less accurate than the values of $V$ itself, resulting in reduced accuracy of $Z_0$ in those regions. 
Since this paper focuses on the regime $0<\epsilon \ll 1$, where  the Fokker-Planck equation becomes highly singular, the effect of inaccuracies in $Z_0(\bm x)$ is negligible
in regions far from the attractor, as the leading  term $\exp{-V/\epsilon^2}$ is already exponentially small.

\section{Numerical examples}\label{sec:example}
This section applies the DeepWKB method 
to compute the invariant distributions of several stochastic systems by approximating their WKB expansions, where the corresponding deterministic dynamics exhibit non-trivial attractors. In subsection \ref{sec:WKB_verify}, the validity of the WKB expansions are  statistically verified.

In all the numerical examples, a six-layer feedforward neural network ${\rm NN}:\mathbb{R}^n \to \mathbb{R}$ is employed, with  layer widths 
\[n\to 32 \to 256 \to 256 \to 256 \to 64 \to 16 \to 1,\]
where $n$ denotes the dimension of the system. The network uses sigmoid activation functions and incorporates $L_2$ weight regularization with penalty parameter $\lambda = 10^{-3}$. 

\subsection{Stochastic Van der Pol oscillator}
The first example considered is the stochastic Van der Pol oscillator
\begin{eqnarray}\label{eq-VDP}
\left\{\begin{aligned}
    &\mathrm{d}X_t = \mu(X_t - \frac{1}{3}X_t^2 - Y_t) \mathrm{d}t + \sqrt{\epsilon} \mathrm{d}B^{(1)}_t\\
    &\mathrm{d}Y_t = \frac{1}{\mu}X_t \mathrm{d}t + \sqrt{\epsilon} \mathrm{d}B^{(2)}_t,
\end{aligned}
\right.
\end{eqnarray}
where  $\mu$ denotes the scaling parameter of  the nonlinear damping. It is well-known that  the deterministic part of \eqref{eq-VDP} admits a stable limit cycle.

In the simulations, $\sqrt\varepsilon$ takes values $0.08, 0.09, 0.1, 0.11, 0.12, 0.14, 0.16, 0.18, 0.2$, and $0.24$. For each $\epsilon$, samples are collected on a $1024\times 1024$ bin grid over the domain $[-3,3]\times[-3,3]$ 
by simulating $10$ parallel trajectories of equation \eqref{eq-VDP} up to time $t = 10^7$.  The deterministic system is integrated
up to time $t = 50000$, yielding  3551 collocation points on the limit cycle. In addition, 20000 collocation points without output values are generated to minimize the loss $L^V_3$  using the algorithm proposed in \cite{zhai2022deep}. The training set is illustrated in the top-left panel of Figure \ref{vdp-fig1}. 

The network is trained for 200 epochs are implemented using a batch size of 128. 
The training minimizes the composite loss
\[
L^V(\bs\theta):=\; L_1^V(\bs\theta) \;+\; L_2^V(\bs\theta) \;+\; L_3^V(\bs\theta),
\]
where
\begin{align*}
L_1^V(\bs\theta)&= \mathbb{E}_{\bs x\in\mathbf{X}_1}\bigl[V_{\bs\theta}(\bs x)^2 + \max\{0,\,-V_{\bs\theta}(\bs x)\}\bigr],\\
L_2^V(\bs\theta) &= \mathbb{E}_{\bs x\in \mathbf{X}_2}\bigl[(V_{\bs\theta}(\bs x)-\hat V(\bs x))^2 + \max\{0,\,-V_{\bs\theta}(\bs x)\}\bigr],\\
L_3^V(\bs\theta) &= \mathbb{E}_{\bs x\in \mathbf{X}_3}\bigl[b(\bs x)\!\cdot\!\nabla V_{\bs\theta}(\bs x) + \tfrac12\|\nabla V_{\bs\theta}(\bs x)\|^2\bigr]^2.
\end{align*}
Three Adam optimizers with learning rates $2\times10^{-4}$, $5\times10^{-4}$, and $1\times10^{-4}$  are applied alternately to the losses $L^V_1$, $L^V_2$, and $L^V_3$, respectively. 
The training result is shown in the top-middle panel of Figure \ref{vdp-fig1}.

After training the quasi-potential $V,$ another $171$ collocation points are random selected near the level set $V = 0.05$, and the partial derivatives of $V_{\bs\theta}$ are evaluated at these points. The symplectic Euler method with a step size $10^{-4}$  is then employed to  integrate  the characteristic curves of the Hamilton-Jacobi equation up to the level set $V = 0.4$
The resulting characteristic curves are displayed in the top-right panel of Figure \ref{vdp-fig1}, where $20$ collocation points are uniformly sampled along each curve. 
The final learned quasi-potential $V_{\bm \theta}$ is shown in the bottom-left panel of Figure \ref{vdp-fig1}. 

Finally, using the trained $V_{\bs \theta}$, a second neural network with the same architecture is trained to approximate the prefactor $Z_0$. 
The network architecture is identical to that used for learning $V.$ The output values of $Z_0$  at 
$3000$ randomly sampled  collocation points  near the attractor 
are obtained via linear regression. 
To improve training quality, $Z_0$ is further evaluated along  the characteristic curves using the method described in Section \ref{sec:training set expand}. From these characteristics, an additional set of $3000$ collocation points is randomly selected. The training result for $Z_0$ is shown in the  bottom-middle panel of Figure \ref{vdp-fig1}. 

As discussed in Section \ref{sec:NN_Z}, the training accuracy of $Z_0$ deteriorates in regions far from the attractor. 
This is because the training of function $Z_0$ relies on the second derivatives of $V_{\bm \theta}$, whereas  the loss function $L_3^V$ only regularizes its first derivatives. 
However, 
since the leading term in the WKB approximation,  $\exp\{-V/\epsilon\}$, becomes exponentially small away from the attractor, accurate evaluation of $Z_0$ is essential only near the attractor. 
The bottom-right panel of  Figure \ref{vdp-fig1} illustrates the WKB approximation $Z_0 \exp\{-V/\epsilon\}$  with $\epsilon = 0.0225$, showing good agreement with the invariant probability density $u_\epsilon$. 

\begin{figure}[htbp]
    \centering
    \includegraphics[width=\linewidth]{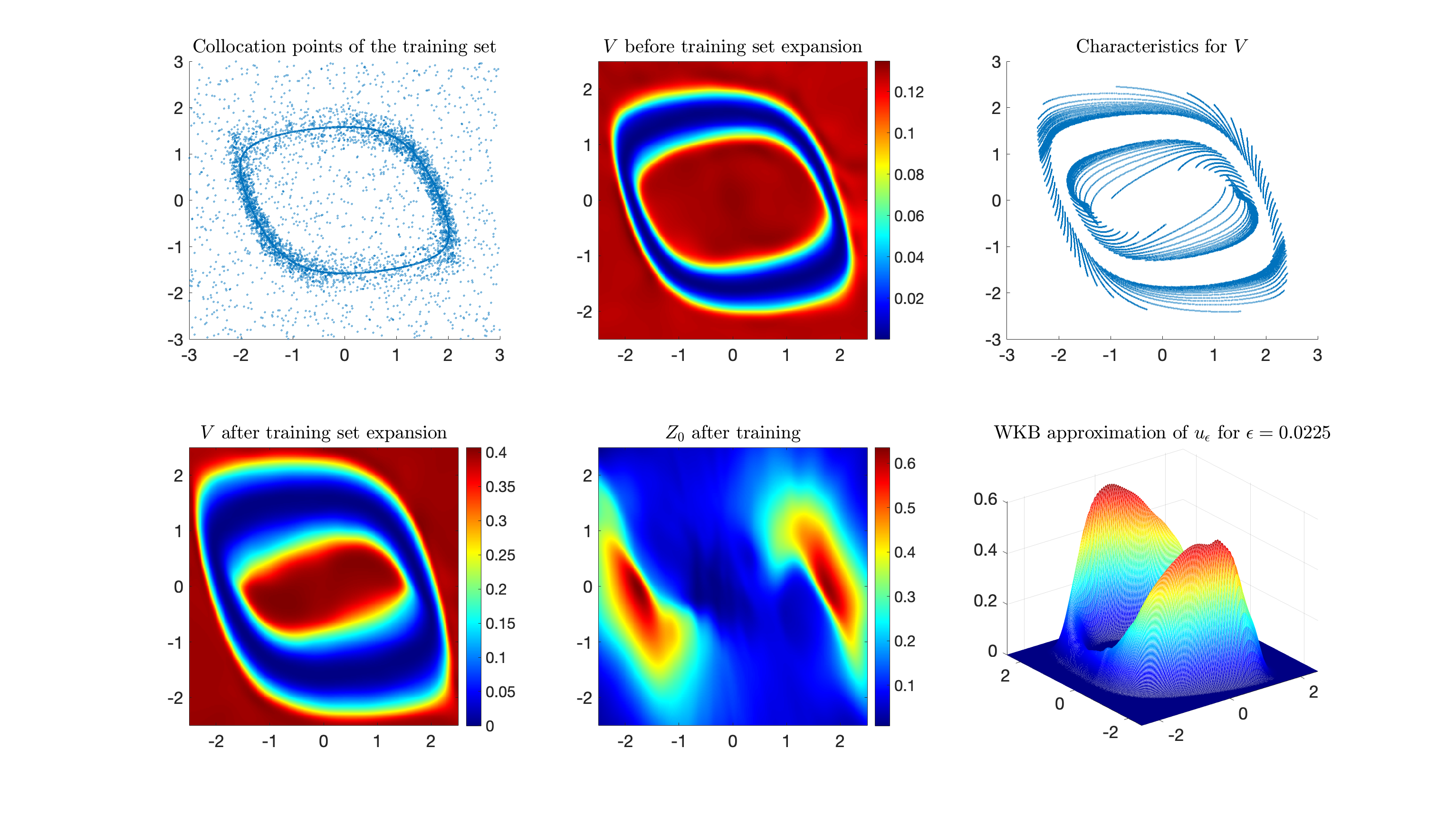}
    \caption{Top Left: Collocation points of training sets $\mathbf{X}_1$ and $\mathbf{X}_2$. Top Middle: $V_{\boldsymbol\theta}$ trained before training set expansion. Top Right: Expansion of the training set using method of characteristics. Bottom Left: $V_{\boldsymbol\theta}$ trained after training set expansion. Bottom Middle: Neural network approximation of $Z_0$. Bottom Right: WKB approximation $u_\epsilon \approx Q(\epsilon)^{-1}Z_0 \exp\{-V/\epsilon\}$ for $\epsilon = 0.0225$. }
    \label{vdp-fig1}
\end{figure}

\subsection{Stochastic dynamics of figure-eight attractor} 
This subsection investigates the WKB expansion of invariant distributions for stochastic perturbations of a figure-eight attractor, given by 
\begin{eqnarray} \label{eq-f8}
\left\{
\begin{aligned}   
    \mathrm{d}X &= \left ({\partial_y H} - \mu H{\partial_x H} \right ) \mathrm{d}t + \sqrt{\epsilon} \mathrm{d} B^{(1)}_t\\
    \mathrm{d}Y &= \left( -{\partial_x H} - \mu H {\partial_y H} \right)\mathrm{d}t +  \sqrt{\epsilon} \mathrm{d} B^{(2)}_t \,,
\end{aligned}
\right.
\end{eqnarray}
where the damping strength $\mu$ is set to $0.5$. The deterministic part of \eqref{eq-f8} is a dissipative perturbation of a Hamiltonian system with  Hamiltonian 
\[
 H = \frac{1}{2} Y^2 + \frac{1}{12}X^4 - \frac{1}{2}X^2.
\]
The corresponding vector field decomposes into the sum of two orthogonal components $(-\mu H H_x, -\mu H H_y)$ and $(H_y, -H_x)$, where the first term is the gradient of potential $\frac{1}{2}\mu H^2$ and the second one is orthogonal to it. The quasi-potential in this example can be computed explicitly 
as $V = \mu H^2$, and the prefactor $Z_\epsilon$ is a constant.  

Due to damping, the deterministic system associated with \eqref{eq-f8} is attracted to the energy surface $H = 0$, which admits an attractor in the shape of a figure-eight loop in the phase plane. 
The origin $(0,0)$ is a saddle point whose  stable and unstable manifolds coincide to 
form two homoclinic orbits; see the top-left panel of Figure \ref{f8-fig1} for an illustration.The stochastic system \eqref{eq-f8} exhibits an intriguing limiting behavior: as $\epsilon\to0$, trajectories  spend increasingly long periods near the saddle $(0,0)$ before following the homoclinic orbit. As a result, although the probability density $u_\epsilon$ remains non-zero
along the homoclinic orbit for  sufficiently small $\epsilon$,  the invariant probability measure $\pi_\epsilon$ converges weakly to the Dirac measure  $\pi_0 = \delta_0$  as $\epsilon \rightarrow 0.$ 

It is worth noting that the speed of the convergence of $\pi_\epsilon$ to its weak limit, as $\epsilon \rightarrow 0$, is notably slow. For general SDEs, the large deviation theory implies that 
$\pi_\epsilon(A) \approx \exp(-c/\epsilon^2)$ for some constant $c > 0$, provided that the set $A$ does not intersect the support of  the weak limit $\pi_0$. In the present example, consider an open set $A$ that intersects the homoclinic orbit but excludes the support of $\pi_0$. Since trajectories of \eqref{eq-f8} requires $O(1)$ time to traverse the homoclinic loop and $O(- \log \epsilon)$ time to pass through a neighborhood of the saddle point, it is expected that 
\[\pi_\epsilon(A) \approx O(-1/\log \epsilon).\] In other words, $\pi_\epsilon(A)$ in this example is significantly large than $\exp(-c/\epsilon^2)$. For detailed studies on the dynamics of SDEs near saddle points, 
readers are referred to \cite{bakhtin2011noisy, monter2011normal, bakhtin2010small}. In particular, as noted in \cite{young2002srb}, the figure-eight attractor does not admit an SRB measure. 

Although the stochastic dynamics associated with the figure-eight attractor differs from that of classical attractors, the proposed DeepWKB algorithm remains applicable for computing the WKB expansion of its invariant distribution. 
The value of $\sqrt\epsilon$ is taken as $0.08, 0.09,$ $0.1, 0.11, 0.12, 0.14, 0.16, 0.18, 0.2,$ and $0.24$, with each simulation run for $2\times 10^{10}$ steps using a  step size $\mathrm{d}t = 0.002$. For each $\epsilon$, the  invariant probability density  is evaluated at $5000$ collocation points. After excluding  points with insufficient samples, least square fitting is  performed at $4287$ collocation points. An additional $2404$ collocation points are generated near the figure-eight attractor, along with $713$  points far from the attractor, for which artificial output values are assigned. A scatter plot of the resulting training set is shown in the top-middle panel of Figure \ref{f8-fig1}. 

To improve training performance, the training set for $V$ is expanded to regions farther from the attractor along the characteristic curves. Specifically, an additional $273$ collocation points are selected near the level set $V=0.06$, where the partial derivatives of $V_{\bm \theta}$ are computed. These values serve as initial conditions 
for computing the characteristic curves of the Hamilton-Jacobi equation using the symplectic Euler method. 
Along each characteristic curve, $15$ additional  collocation points are generated, forming an augmented training set; see the top-right panel of Figure \ref{f8-fig1}. The resulting trained $V_{\bm \theta}$ on the expanded training set is shown in the bottom-right panel of Figure \ref{f8-fig1}. 

The trained $V_{\bs \theta}$ is  compared with its theoretical counterpart $\mu H^2$. As shown in the bottom-middle panel of Figure \ref{f8-fig1}, the approximation error is less than $0.01$ in a neighborhood of the attractor. The trained $V_{\bm \theta}$ is then used to compute the function $Z_0$. As descried in Section \ref{sec:Z}, 
initial estimates of $Z_0$ near the attractor are obtained via linear regression on  Monte Carlo data of the invariant distributions 
at $3000$ collocation points.  The approximated invariant density function   \[u_\epsilon=Q(\epsilon)^{-1}Z_0 \exp\{-V/\epsilon\}\] with $\sqrt{\epsilon} = 0.15$ is shown in the bottom-right panel of Figure \ref{f8-fig1}.

\begin{figure}[htbp]
    \centering    \includegraphics[width=\linewidth]{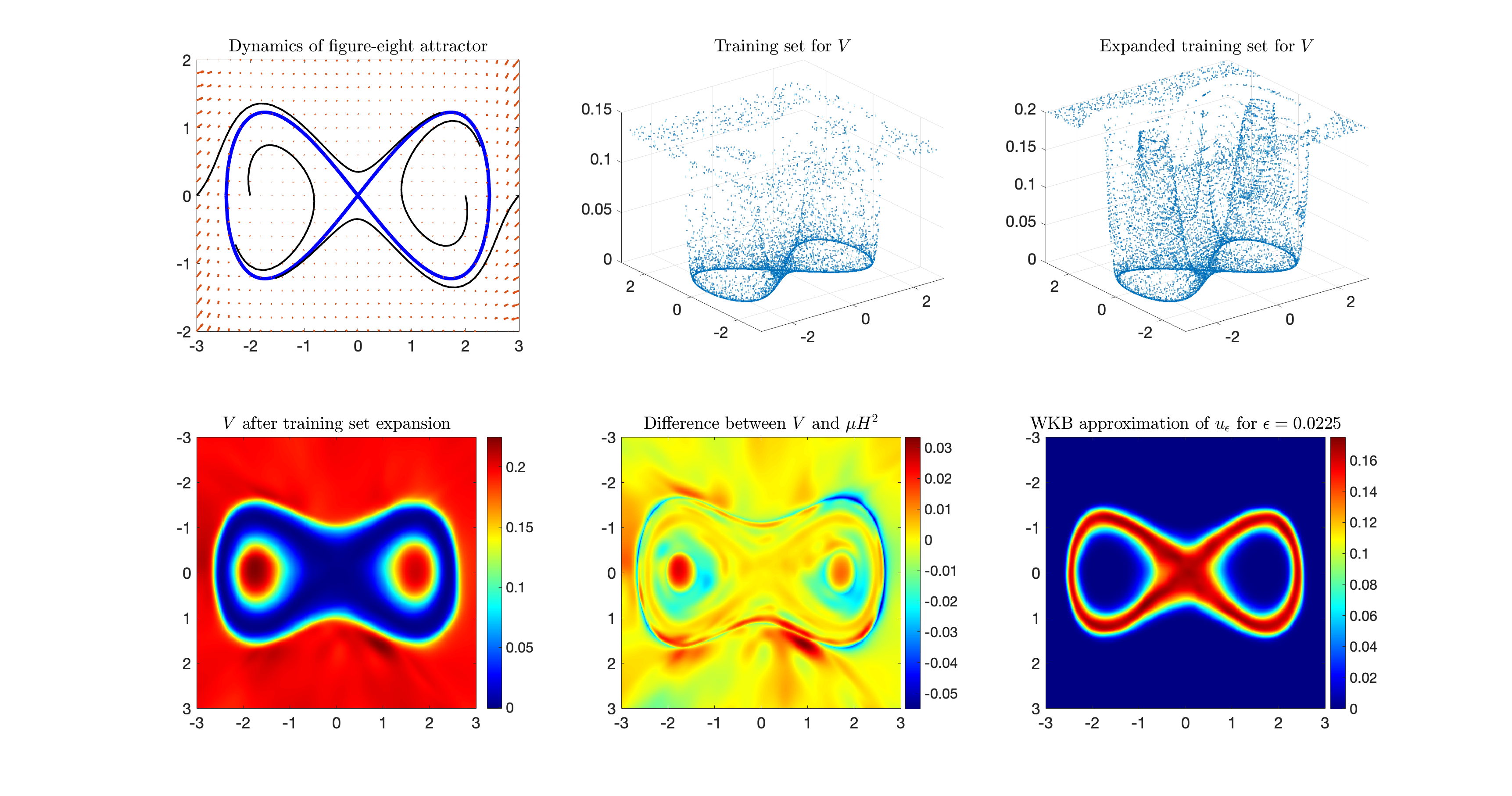}
    \caption{Top Left: Sketch of deterministic dynamics of figure-eight attractor. Top Middle: 3D scatter plot of training set for $V$. Top Right: 3D scatter plot of expanding training set for $V$. Bottom Left: Training result of $V$ after training set expansion. Bottom Middle: Error plot comparing the trained $V$ and its theoretical counterpart $\mu H^2$. Bottom Right: WKB approximation $u_\epsilon =Q(\epsilon)^{-1} Z_0\exp\{-V/\epsilon\}$ for $\epsilon = 0.0225$.}
    \label{f8-fig1}
\end{figure}

\subsection{Stochastic coupled Van der Pol oscillators}
The example in this subsection considers a system of two coupled Van der Pol oscillators
\begin{eqnarray}\label{eq-VDP4}
\left\{  
    \begin{aligned} 
    &\mathrm{d}X_1 = \big(\mu(X_1 - \frac{1}{3}X_1^3 - Y_1) + \delta (X_1 - X_2)\big) \mathrm{d}t + \sqrt{\epsilon} \mathrm{d}B^{(1)}_t\\
    &\mathrm{d}Y_1 = \frac{1}{\mu}X_1 \mathrm{d}t + \sqrt{\epsilon} \mathrm{d}B^{(2)}_t \\
    &\mathrm{d}X_2 = \big(\mu(X_2 - \frac{1}{3}X_2^3 - Y_2) + \delta (X_2 - X_1) \big)\mathrm{d}t + \sqrt{\epsilon} \mathrm{d}B^{(3)}_t\\
    &\mathrm{d}Y_2 = \frac{1}{\mu}X_2 \mathrm{d}t + \sqrt{\epsilon} \mathrm{d}B^{(4)}_t
\end{aligned}\right.
\end{eqnarray}
where $\mu$ denotes the scaling parameter of nonlinear damping, and $\delta$ denotes the coupling strength between the two oscillators. 

 This example aims to demonstrate that the DeepWKB method is applicable to higher-dimensional problems whose attractors are  invariant manifolds rather than simple limit cycles. Specifically, the fully decoupled case $\delta = 0$ is considered, for which the deterministic part of equation \eqref{eq-VDP4} admits a stable invariant manifold $S\times S$, where $S$ denotes the limit cycle of the two-dimensional Van der Pol oscillator. From a numerical perspective, introducing  coupling ($\delta \neq 0$) simplifies the problem by inducing  phase-locking between the oscillators, thereby reducing the invariant manifold to a  single limit cycle. Moreover,  the fully decoupled case  provides results that are easier to visualize.  

As in the two-dimensional Van der Pol oscillator, three classes of collocation points are selected: points on the attractor, points near the attractor for the WKB approximation, and points distributed over the entire numerical domain for minimizing  the operator residual. Monte Carlo simulations are  conducted for $\sqrt\epsilon = 0.08, 0.09, 0.1, 0.11, 0.12, 0.14, 0.16, 0.18, 0.2, $ and $0.24$, and the invariant  density function is estimated at $50000$ collocation points using the mesh-free sampler developed in \cite{zhai2022deep}. After discarding collocation points with insufficient samples, least square fitting is applied at the remaining $38056$ points to obtain  estimation of $V$, $Z_0$ and $Z_1$.  

An additional $10000$ collocation points are randomly sampled on the invariant manifold, along with $10000$ collocation points farther  from the attractor, where  artificial value are prescribed. A three-dimensional scatter plot of the collocation points $(X_1, X_2, V)$ is  shown in the left panel of Figure \ref{vdp4-fig1}. To 
visualize the four-dimensional function, two-dimensional slices are taken at 
\begin{eqnarray}\label{slice}
\quad\big\{(-0.064, 1.58, x, y)\,|\, x, y \in \mathbb{R}\big\}\ \ \text{and}\ \ \big\{(x, y, 1.800, 0.166)\,|\, x, y \in \mathbb{R}\big\}.    
\end{eqnarray}
The fixed coordinates in each slice are randomly selected along the limit cycle of the two-dimensional  Van der Pol oscillator. The trained quasi-potential functions on these two  slices are displayed in the  middle and right panels of Figure \ref{vdp4-fig1}. 

\begin{figure}[htbp]
    \centering
    \includegraphics[width=\linewidth]{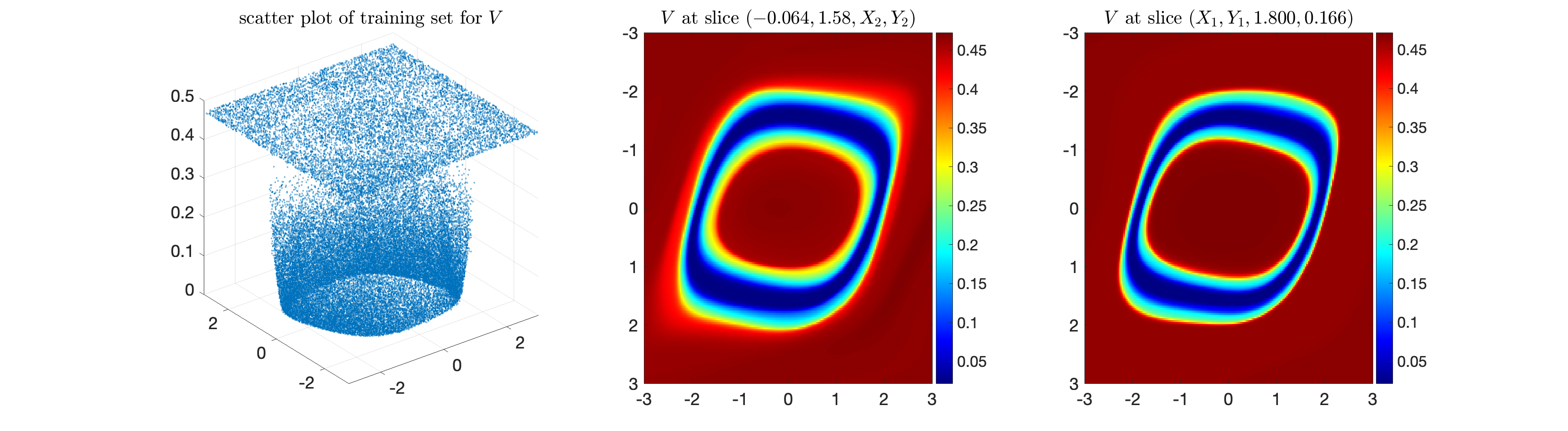}
    \caption{Left: 3D scatter plot of $(X_1, X_2, V)$-component of the training set. Middle and Right: Trained quasi-potential function $V$ on  two different two-dimensional slices.}
    \label{vdp4-fig1}
\end{figure}

Next, $600$ points are randomly selected on the level set of $V = 0.06$, and the partial derivatives of $V_{\bm \theta}$ are evaluated at these points. These positions and derivatives are then used to compute the characteristic curves of the associated Hamilton-Jacobi equation, thereby expanding the input set of the training data to the level set $V = 0.2$. Using the previously obtained parameter ${\bm \theta}$ as an initial guess,  the training of $V_{\bs\theta}$ is refined on the augmented training set. A three-dimensional scatter plot of the extended training set and the trained quasi-potential function, along with the trained quasi-potential function restricted to the 
 two-dimensional slices \eqref{slice}, is shown in  the  top three panels of Figure \ref{vdp4-fig2}. 

Finally, the trained quasi-potential  $V_{\bm \theta}$ is used to train the prefactor function $Z_0$. The bottom-left panel of Figure \ref{vdp4-fig2}  displays the values of $Z_0$ restricted to the two-dimensional slice where the last two coordinates are fixed at $(1.800, 0.166)$. In this example, since the training set for $Z_0$ is limited to the neighborhood of the attractor, the resulting approximation is valid only near the attractor. Once $Z_0$ is obtained, the WKB ansatz is employed to approximate the invariant probability density $u_\epsilon$ for $\epsilon = 0.0225$. The WKB estimation, given by  
$Q(\epsilon)^{-1}Z_0 \exp\{-V/\epsilon\}$, evaluated on the two aforementioned two-dimensional slices, is shown in the bottom middle and right panels of Figure~\ref{vdp4-fig2}.
It is observe that, despite  the lack of training data for $Z_0$ beyond the vicinity of the attractor, the WKB approximation captures the invariant probability density $u_\epsilon$  quite well in the small-noise regime.

\begin{figure}[htbp]
    \centering
    \includegraphics[width=\linewidth]{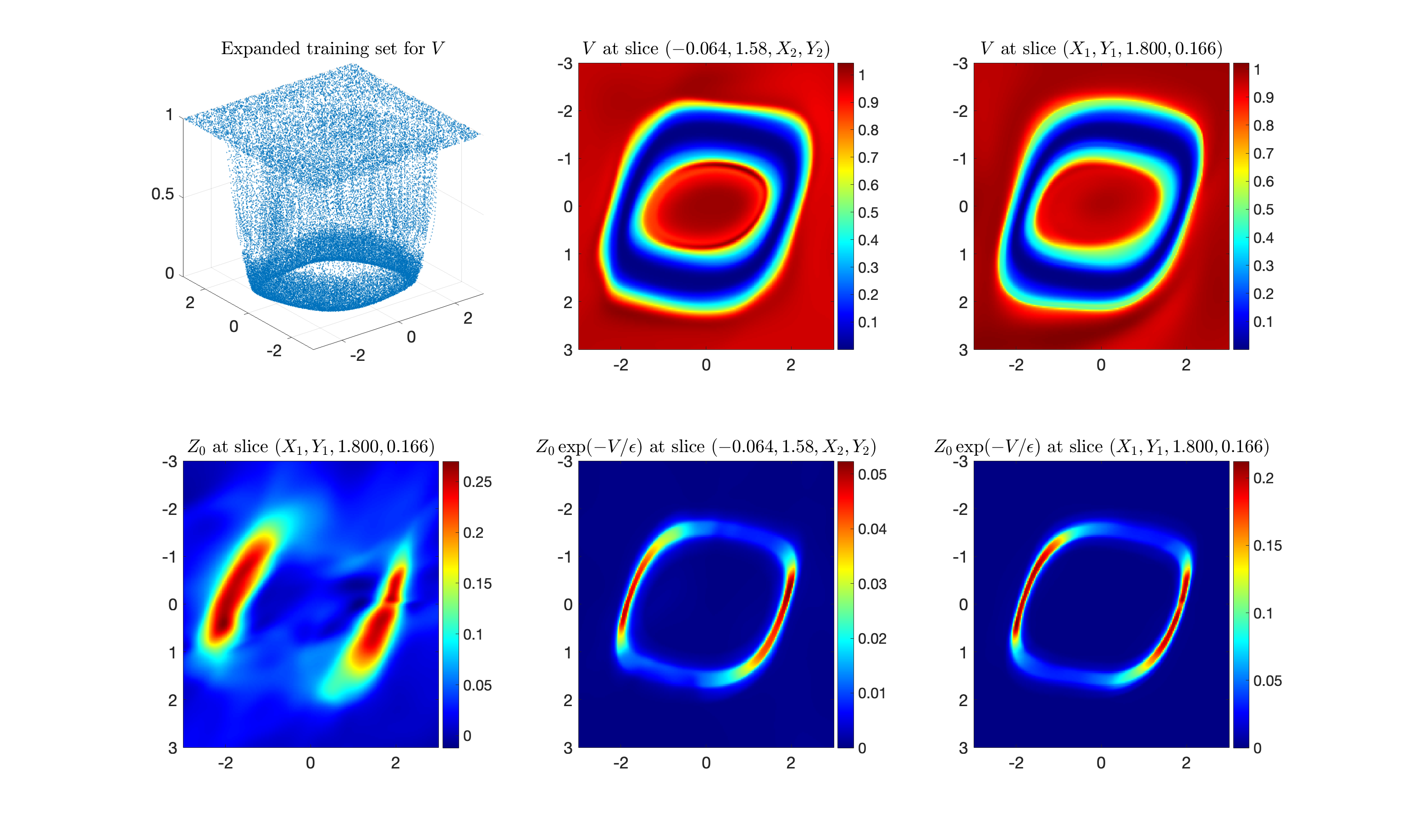}
    \caption{Top Left: 3D scatter plot of $(X_1, X_2, V)$ after training set expansion. Top Middle and Left: Trained $V$ at two 2D slices after training set expansion. Bottom Left: $Z_0$ on the 2D slice $\{(x, y, 1.800, 0.166)\,|\, x, y \in \mathbb{R}\}$. Bottom Middle and Right: WKB approximations $u_\epsilon \approx Q(\epsilon)^{-1}Z_0\exp\{-V/\epsilon\}$ restricted on two  2D slices for $\epsilon = 0.0225$.}
    \label{vdp4-fig2}
\end{figure}

\subsection{Stochastic R\"ossler oscillator}
This subsection investigates  the WKB approximation of the invariant distribution for the stochastic R\"ossler oscillator
\begin{eqnarray}\label{eq-rossler}
\left\{
\begin{aligned}    
    \mathrm{d}X_t &= (- Y_t - Z_t) \mathrm{d}t + \sqrt{\epsilon} \mathrm{d}B^{(1)}_t\\
    \mathrm{d}Y_t &= ( X_t + a Y_t) \mathrm{d}t + \sqrt{\epsilon} \mathrm{d}B^{(1)}_t\\
    \mathrm{d}X_t &= (b +  Z_t(X_t - c)) \mathrm{d}t + \sqrt{\epsilon} \mathrm{d}B^{(1)}_t
\end{aligned}
\right.
\end{eqnarray}
with the classical parameters $a = 0.2, b = 0.2$, and $c = 5.7$m under which the deterministic R\"ossler system exhibits chaotic dynamics; see the top-left panel of Figure \ref{rossler-fig}. 
Until now, there has been  no rigorous theoretical justification for the applicability of the WKB approximation to the invariant distributions of the stochastic R\"ossler oscillator. 
However, as shown in the next subsection, statistical verification over the chosen range of $\epsilon$ confirms that the invariant distribution admits a WKB expansion. Thus, the quasi-potential function $V$ and the prefactor function $Z$ can be computed as in the previous examples.   

The noise parameters are chosen as $\sqrt{\epsilon} = 0.1, 0.11,$ $0.12, 0.14, 0.16$, $0.18,$ $0.2, 0.22,$ $ 0.24,$ and $0.28$. Each simulation is performed for $4 \times 10^{10}$ steps with a step size $\mathrm{d}t = 0.001$. Unlike the previous  examples, 
the stochastic R\"ossler oscillator does not possess a stationary  distribution, as its trajectories retain a small probability of escaping to infinity. Instead, it 
admits a quasi-stationary distribution (QSD), defined by conditioning on non-escape. 
Due to the low escaping rate,  the WKB expansion of the QSD in the same form can still be investigated. 

The mesh-free sampling method proposed in  \cite{zhai2022deep} is employed to estimate the QSD at $60000$ collocation points. All collocation points lie within the box $[-15, 15]\times [-1.5, 1.5] \times[-1.5, 1.5]$, which does not include the top arch of the R\"ossler oscillator. After discarding collocation points with insufficient  samples, least square fitting is applied to estimate the functions $V$, $Z_0$, and $Z_1$ at  the remaining $50007$ collocation points. Additionally, $20000$ collocation points are randomly selected on the deterministic attractor, along with $10000$ additional points farther from the attractor, where artificial values are prescribed.
Two three-dimensional scatter plots 
are shown  in Figure \ref{rossler-fig}: the top-middle panel depicts the distribution of the collocation points, while the top-right panel illustrates the $(x, y, V)$-component of the training set. 
The trained quasi-potential function evaluated at two planar slices with $z = -0.05$ and $z = 0.05$ is shown in the bottom-left and bottom-middle panels of Figure \ref{rossler-fig}.     

Finally, the trained quasi-potential function is used to train the prefactor function $Z_0$. 
For $\epsilon = 0.075$, the variable $z$ in the expression  
$Z_0 \exp\{-V/\epsilon\}$ is integrated over the interval $[-0.1,0.1]$, covering the ``disk''-shape region of the R\"ossler oscillator. Define 
$$
    \Phi(x,y) = \int_{-0.1}^{0.1} Z_0(x,y,z) \exp(-V(x,y,z)/\epsilon) \mathrm{d}z.
$$
The heat map of $\Phi$ is shown in the bottom-right panel of Figure \ref{rossler-fig}. 

\begin{figure}[htbp]
    \centering    \includegraphics[width=\linewidth]{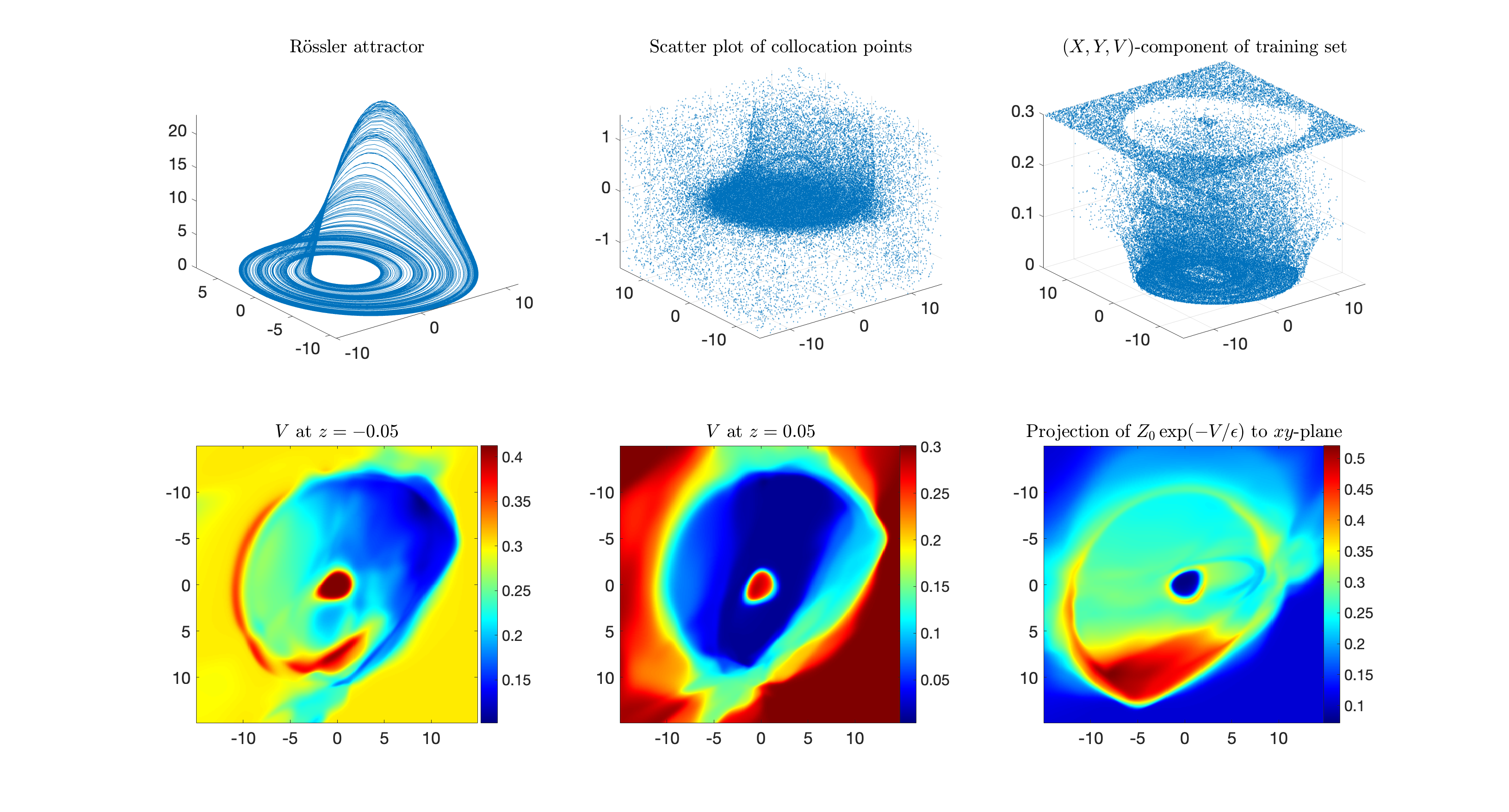}
    \caption{Top Left: the R\"ossler attractor. Top Middle: Scatter plot of collocation points. Top Right: Scatter plot of $(X, Y)$-component of the training set and the corresponding $V$-values. Bottom Left and Middle: Training  results for the quasi-potential function $V$ at $z = -0.05$ and $z = 0.05$. Bottom Right: WKB approximation $u_\epsilon \approx Q(\epsilon)^{-1}Z_0 \exp\{-V/\epsilon\}$ for $\epsilon = 0.075$, projected onto the $xy$-plane.}
    \label{rossler-fig}
\end{figure}

The training results for the stochastic R\"ossler oscillator are less satisfactory compared to the previous three examples. The quasi-potential function fails to capture the 
fractal structure present in the ``disk'' region of the strange attractor, and expanding the training set does not yield significantly improvement in the accuracy of $V$. Consequently, the prefactor $Z_0$ is less accurate than  in the other three examples. 
A possible explanation is that the attractor of the R\"ossler oscillator covers a large domain, and neural networks generally approximate functions less effectively over such extensive regions. This limitation arises from the near-zero derivatives of activation functions at large input values, which obstruct the training process. 
While rescaling the domain to $[-1.5, 1.5]^3$ could  mitigate this issue, it artificially amplifies the derivatives of both $V$ and $Z_0$, resulting in poorer neural network approximations.
Moreover, since the R\"ossler attractor is a strange attractor  rather than a smooth manifold, selecting  suitable initial points for the characteristic curves to  achieve an even coverage of the domain is a substantial challenge. 

\subsection{Statistical verification of  WKB approximations}
\label{sec:WKB_verify}
In this subsection, the statistical method proposed in Section \ref{sec:WKB_statistical_validation} is employed to assess the validity of  the WKB approximations for the 
numerical examples.

For the first three examples,  statistical verification of WKB is presented only for the stochastic Van der Pol oscillator. In this case, the deterministic system admits a stable limit cycle, and the 
validity of the WKB approximation is theoretically guaranteed in the small noise limit. This helps to confirm the effectiveness of the 
statistical method.  
Verifications for the other  two examples proceed analogously. 

\vspace{1mm}

{\bf Stochastic Van der Pol oscillator.} 
Monte Carlo simulation is employed to statistically verify the WKB expansion of the stochastic Van der Pol oscillator \eqref{eq-VDP}. To mitigate the effect of bin size, samples are collected on a  $2048\times 2048$ bin grid over the  domain $[-3, 3]\times[-3, 3]$. The noise magnitudes are set to $\sqrt\epsilon = 0.04, 0.05, 0.06, 0.07, 0.08, 0.09, 0.1, 0.12, 0.14$, and $0.16$ respectively, with the smallest $\epsilon$ at least $10$ times larger than the bin size. 

For each value of $\epsilon$, 10 independent trajectories are simulated in parallel up to time $t = 4 \times 10^7$ via  Monte Carlo method using a time step size $\mathrm{d}t = 0.002$. Samples are collected every
$\mathrm{d}T = 0.02$, corresponding to one sample every $10$ steps under the Euler-Maruyama scheme. The resulting data are analyzed using Algorithm \ref{alg:WKB}. As shown in  Figure \ref{fig1}, the RSS $\| \mathbf{r}\|^2$ displays no apparent spatial concentration (top-left panel), and its normalized histogram closely follows the $\chi^2$-distribution with $7$ degrees of freedom (top-right panel). 

For comparison, the Monte Carlo simulation is repeated using a smaller
sampling interval $\mathrm{d}T = 0.002$. The corresponding results are illustrated in the two bottom panels of Figure \ref{fig1}. It is observed that the RSS values increase due to a reduction in the number of  effective samples relative to the total number  collected. Nevertheless, the empirical distribution of the RSS remains qualitatively similar to that shown in the top panels of Figure \ref{fig1}. 
This provides convincing evidence that the stochastic Van der Pol oscillator has a WKB expansion for the tested range of $\epsilon$. This  confirms the consistency of the proposed statistical verification method with the theoretical predictions.

\begin{figure}[htbp]
    \centering
    \includegraphics[width=\linewidth]{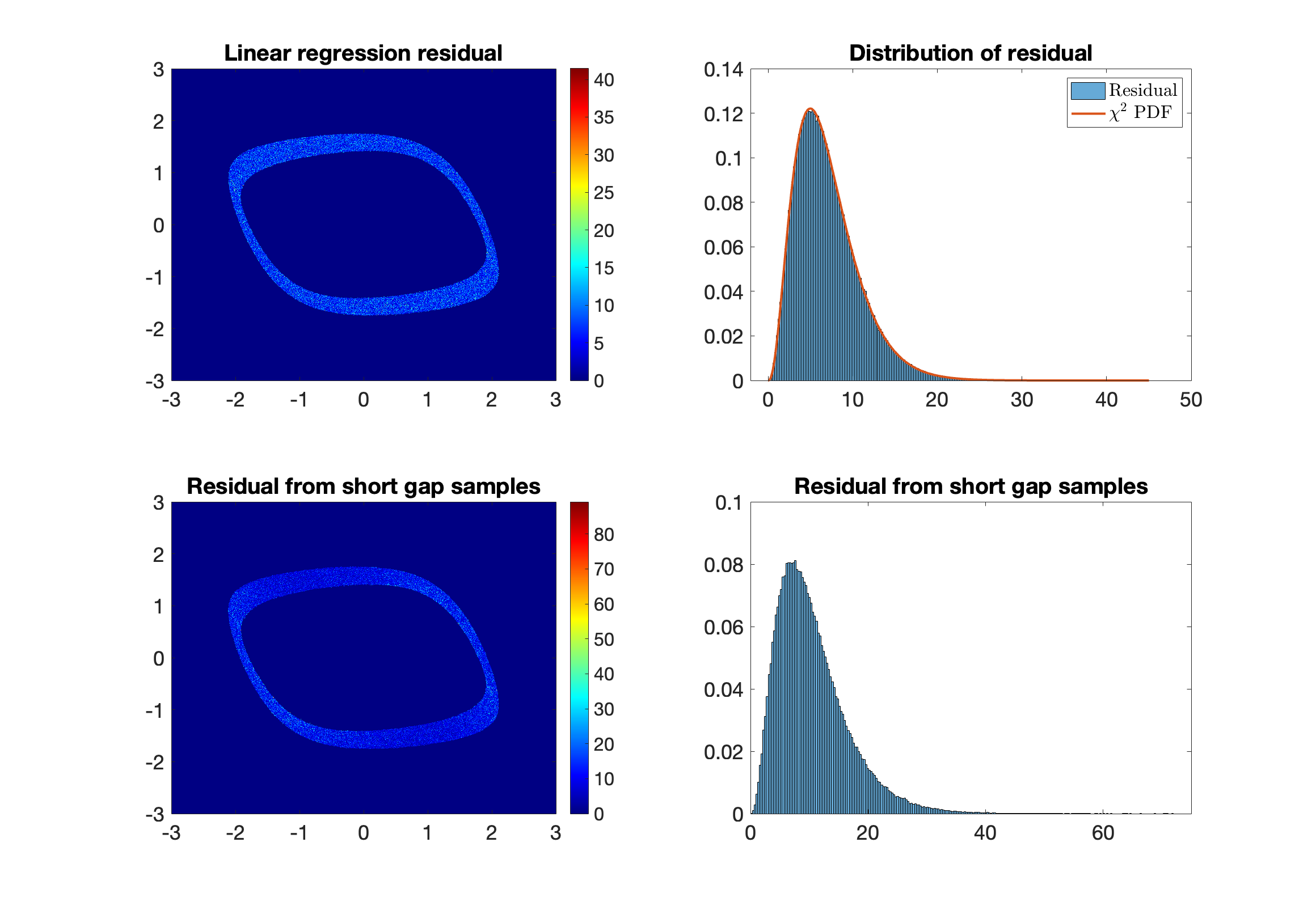}
    \caption{Top Left: RSS at collocation points. Top Right: normalized histogram of RSS vs $\chi^2$ distribution with $7$ degree of freedom. Bottom: RSS at collocation points and histogram with small sampling interval between Monte Carlo samples.}
    \label{fig1}
\end{figure}

\vspace{1mm}

{\bf Stochastic R\"ossler oscillator.} The WKB expansion of the stochastic R\"ossler oscillator  \eqref{eq-rossler} is also statistically verified. Due to the difficulty of visualizing a three-dimensional probability density function, the  density is estimated on the slice $Z = 0.01$ using a $3000\times 3000$ bin mesh covering the square $-15 \leq X, Y \leq 15$. The noise magnitudes are chosen as $\sqrt\epsilon = 0.1, 0.11, 0.12, 0.14,$$0.16, 0.18,$ $0.2, 0.22,$ $0.24$, and $0.28$, with the smallest $\epsilon$ at least $10$ times of the bin size. 
Samples are collected at intervals $\mathrm{d}T = 0.02$, $10$ times the Euler-Maruyama time step size. 

For each $\epsilon$,  $48$ independent trajectories are simulated in parallel up to time $t = 10^7$. The RSS $\| \mathbf{r}\|^2$ is computed  at each collocation point with enough samples and displayed in the top-left panel of Figure \ref{fig2}. The RSS exhibits no apparent spatial structure,  aside from an arch-shaped region with slightly higher values. 
The normalized histogram of RSS matches well with the $\chi^2$ distribution with $7$ degrees of freedom; see top-right panel of Figure \ref{fig2}. The least square fit of $D \hat{\mathbf{u}}$ versus  $\epsilon$ at  collocation points with the largest residual is shown in the bottom-left panel of Figure \ref{fig2}, demonstrating reasonable accuracy despite some fluctuations. For comparison, the bottom-right panel displays the RSS distribution obtained using a reduced sampling interval $\mathrm{d}T = 0.002$. In this case, the residual increase substantially in certain regions, underscoring the importance of using sufficiently large sampling intervals when verifying the WKB approximation. 

Based on the results presented above,  the WKB approximation is expected to hold for the stochastic R\"ossler oscillator, at least within the tested range of $\epsilon$. It should be noted, however, that the simulation results do  not completely exclude the possibility that the fractal structure of the strange attractor may cause a breakdown of the WKB approximation in the limit of vanishing noise. 

\begin{figure}[htbp]
    \centering
    \includegraphics[width=\linewidth]{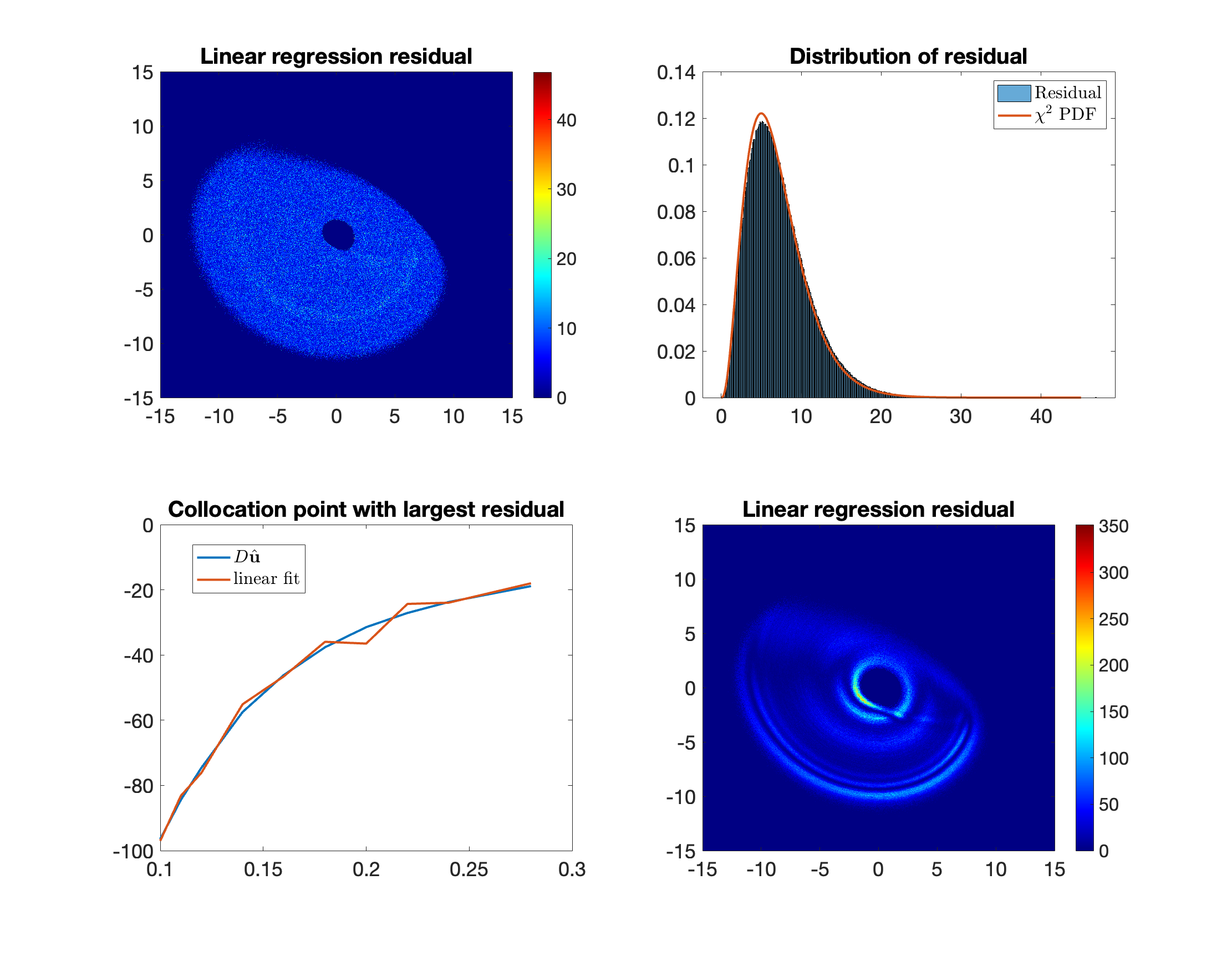}
    \caption{Top Left: RSS at collocation points. Top Right: normalized histogram of RSS versus $\chi^2$ distribution with $7$ degrees of freedom. Bottom Left: Example of least square fitting 
    of $D\hat{\mathbf u}$ vs $\epsilon$  at a collocation point with largest residual. Bottom Right: RSS for simulations with small sampling interval.}
    \label{fig2}
\end{figure}

\section{Conclusion and further discussions}
This paper presents a novel machine learning-based approach, termed the DeepWKB method, for numerically computing the invariant probability density $u_\epsilon$ 
of the  stochastic  system \eqref{SDE} via its WKB approximation \[u_\epsilon(\bs x)=Q(\epsilon)^{-1}Z_\epsilon(\bs x)\exp\{-V(\bs x)/\epsilon\},\] where $\epsilon$ denotes the noise strength. The function $V$, known as the Freidlin-Wentzell quasi-potential, plays a central role in the study of rare events, metastability, transition phenomena, and first-exit problems. The key innovation of the proposed method is the simultaneous estimation of both functions $V$ and  $Z_\epsilon$
near the attractor, using these values as training data for neural networks, without imposing any assumptions on the attractor’s geometric structures. This achieved by applying  linear regression to Monte Carlo samples of  invariant densities 
at multiple values of $\epsilon$. 
The training set is further augmented by integrating $V$ and $Z$ along their characteristic curves, which leads to accurate approximation of them by neural networks. In contrast, most existing methods for computing the quasi-potential $V$  assume  the attractor to be either  a stable equilibrium or a stable limit cycle. 

The DeepWKB method is tested on several numerical examples:  the Van der Pol oscillator with a stable limit cycle, a figure-eight attractor consisting of two homoclinic orbits from a saddle point,
a coupled Van der Pol oscillator with a stable invariant manifold,  and the R\"ossler oscillator with a strange attractor. Since the existence of WKB approximation is generally not guaranteed, a statistical tool is developed to assess its validity. In particular, it is shown that despite  the presence of a strange attractor, the invariant probability density of the stochastic R\"ossler oscillator statistically satisfies the WKB approximation, at least for the tested range of noise magnitudes. 

The DeepWKB method can be further enhanced in two directions. The first  improvement concerns 
the sampling technique. Currently, 
 the probability density function $u_\epsilon$ is sampled using a 
mesh-free Monte Carlo sampler developed in \cite{zhai2022deep}. 
A major limitation of this approach is the difficulty in collecting sufficient samples in high-dimensional settings. With suitable modifications, the forward-reverse sampler proposed in \cite{milstein2004transition} could be adapted to improve sampling quality for SDEs, particularly in approximating  invariant probability distributions. 
The second direction involves expanding the training set to larger domains. It has been observed that characteristics of the  Hamilton-Jacobi equation associated with $V$ may intersect when sufficiently far from the attractor. Consequently, to compute the WKB approximation over arbitrarily large domains, it is necessary to handle shocks and employ algorithms such as weak PINNs for approximating  entropy solutions \cite{de2024wpinns}.

\bibliographystyle{abbrv}
\bibliography{references}

\end{document}